\newtheorem{theorem}{Theorem}[subsection]
\newtheorem{prove}{Proof of Theorem}[subsection]
\newtheorem{lemma}[theorem]{Lemma}
\newtheorem{definition}{Definition}[subsection]
\newtheorem{proposition}[definition]{Proposition}
\begin{document}
\title{\bf{CLASSIFICATION OF 5-DIMENSIONAL MD-ALGEBRAS HAVING NON-COMMUTATIVE DERIVED IDEALS}}
\author{{\bf Le Anh} ${\bf Vu}^{*}$, {\bf Ha Van} ${\bf Hieu}^{**}$ and 
{\bf Tran Thi Hieu} ${\bf Nghia}^{***}$\\
${}^{*}$\footnotesize{Department of Mathematics and Economic Statistics, University of Economics and Law}\\
\footnotesize{Vietnam National University - Ho Chi Minh City, Viet nam}\\
\footnotesize{E-mail:\, vula@uel.edu.vn}\\
${}^{**}$ \footnotesize{E-mail:\, havanhieu88@gmail.com}\\
${}^{***}$ \footnotesize{E-mail:\, hieunghiatoan1a@gmail.com}}
\date{}
\maketitle
\begin{abstract}
The paper presents a subclass of the class of MD5-algebras and MD5-groups, i.e. five dimensional solvable Lie algebras and Lie groups such that their orbits in the co-adjoint representation (K-orbits) are orbits of zero or maximal dimension. The main result of the paper is the classification up to an isomorphism of all MD5-algebras having non-commutative derived ideals.
\end{abstract}
{\sl AMS Mathematics Subject Classification}: Primary 22E45, Secondary 46E25, 20C20.

{\sl Key words}: Lie group, Lie algebra, MD5-group, MD5-algebra, K-orbits.

\subsection*{INTRODUCTION}
In 1962, studying theory of representations, A. A. Kirillov introduced the Orbit Method (see [2]). This method quickly became the most important method in the theory of representations of Lie groups. Using the Kirillov's Orbit Method, we can obtain all the unitary irreducible representations of solvable and simply connected Lie Groups. The importance of Kirillov's Orbit Method is the co-adjoint representation (K-representation). Therefore, it is meaningful to study the K-representation in the theory of representations of Lie groups.

After studying the Kirillov's Orbit Method, Do Ngoc Diep in 1980 suggested to consider the class of Lie groups and Lie Algebras MD such that the $C^*-algebras$ of them can be described by using KK-functors (see [1]). Let G be an n-dimensional real Lie group. G is called an MDn-group if and only if its orbits in the K-representation (i.e. K-orbits) are orbits of dimension zero or maximal dimension. The corresponding Lie algebra of G is called an MDn-algebra. Thus, classification and studying of K-representation of the class of MDn-groups and MDn-algebras is the problem of interest. Because all Lie algebras of n dimension (with $n \le 3$) were listed easily, we have to consider MDn-groups and MDn-algebras with $n \ge 4$. In 1990, all MD4-algebras were classified up to an isomorphism by Vu - the first author (see [5]). Recently, Vu and some his colleagues have continued studying MD5-groups and MD5-algebras having commutative derived ideals (see [6], [7], [8]). In 2008, a classification of all MD5-algebras having commutative derived ideals was given by Vu and Kar Ping Shum (see [9]).

In this paper, we shall give the classification up to an isomorphism of all MD5-algebras $\mathcal{G}$ whose derived ideals ${\mathcal{G}}^{1}:=[\mathcal{G}, \mathcal{G}]$ are non-commutative. This classification is the main result of the paper.

The paper is organized as follows: The first section deals with some preliminary notions, section 2 is devoted to the discussion of some results on MDn-algebras, in particular, the main result of the paper is given in this section.
\section{PRELIMINARIES}
We first recall in this section some preliminary results and notations which will be used later. For details we refer the reader to the book [2] of A. A. Kirillov and the book [1] of Do Ngoc Diep.

\subsection{The K-representation and K-orbits}
Let G be a Lie group, $\cal G$ =Lie(G)  be the corresponding Lie algebra of G and ${\cal G}^*$ be the dual space of $\cal G$. For every $g \in G$, we denote the internal automorphism associated with g by $A_{(g)}$, and whence,   $A_{(g)}:G \to G$ can be defined as follows $A_{(g)}:=g.x.g^{-1},\forall x \in G$. This automorphism induces the following map ${A_{\left(g \right)}}_{*} : {\cal G} \to {\cal G}$  which is defined as follows 

\centerline{${A_{\left( g \right)}}_{*} \left( X \right): = \frac{d}{{dt}}\left[ {g.\exp \left( {tX} \right).g^{ - 1} } \right]\left| {_{t = 0} } \right.$,\, $\forall X \in {\cal G}.$}

This map is called \textit{the tangent map} of  $A_{(g)}$.
We now formulate the definitions of K-representation and K-orbit.
\begin{definition}The action
$$K:G\longrightarrow Aut(\mathcal{G}^{*})$$
$$g\longmapsto K_{(g)}$$
such that
$$\left\langle {K_{\left( g \right)} (F), X} \right\rangle : = \left\langle {F, {A_{\left(g^{ - 1} \right)}}_{*}\left(X \right)} \right\rangle, \forall F \in{\cal G}^* ,\, \forall X \in {\cal G}$$ 
is called the co-adjoint representation or K-representation of G in $\mathcal{{G}^{*}}$.\\
\end{definition}
\begin{definition} Each orbit of the co-adjoint representation
of G is called a K-orbit of G.
\end{definition}
We denote the K-orbit containing F by $\Omega_F$. Thus, for every $F \in {\cal G}^*$, we have $\Omega_F  := \left\{ {K\left( g \right)(F)|g \in G}\right\}$. The dimension of every K-orbit of an arbitrary Lie group G is always even. In order to define the dimension of the K-orbits $\Omega _F$ for each F from the dual space  ${\cal G}^*$  of the Lie algebra $\cal G$ = Lie(G), it is useful to consider the following skew-symmetric bilinear form $B_F$  on $\cal G$: $B_F \left( {X,Y} \right) = \left\langle {F,\left[ {X,Y} \right]} \right\rangle ,\forall X,Y \in {\cal G}$. We denote the stabilizer of F under the co-adjoint representation of G in ${\cal G}^*$  by $G_F$  and  ${\cal G}_F:=$ Lie($G_F$).

We shall need in the sequel of the following result.
\begin{proposition}[see {[2, Section 15.1]}]$KerB_{F} = {\cal{G}}_{F}$
and $dim{\Omega}_{F} = dim{\mathcal{G}} - dim{\mathcal{G}}_{F} = rankB_{F}.$ \hfill{$\square$}
\end{proposition}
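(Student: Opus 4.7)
The plan is to establish the proposition by the classical three-step argument: realise $\Omega_F$ as a homogeneous space, differentiate the stabiliser condition to identify $\mathcal{G}_F$ with $\ker B_F$, and then read off the rank equality from elementary linear algebra.

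First I would observe that the orbit $\Omega_F$ is, by definition, the image of the smooth map $G\to\mathcal{G}^{*}$, $g\mapsto K_{(g)}F$, whose fibre over $F$ is precisely the stabiliser $G_F$. Standard Lie group theory then gives the diffeomorphism $\Omega_F \cong G/G_F$, from which $\dim\Omega_F=\dim G-\dim G_F=\dim\mathcal{G}-\dim\mathcal{G}_F$. This disposes of the first of the two dimension equalities with no real computation.

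The heart of the argument is identifying $\mathcal{G}_F$ with $\ker B_F$. By definition, $X\in\mathcal{G}_F$ iff $\exp(tX)\in G_F$ for all $t$, i.e.\ iff $\langle K_{(\exp tX)}F,Y\rangle=\langle F,Y\rangle$ for every $Y\in\mathcal{G}$ and every $t$. Differentiating at $t=0$ and using the definition of $K$ together with the chain rule, the left side becomes $\langle F,\frac{d}{dt}\big|_{t=0}{A_{(\exp(-tX))}}_{*}(Y)\rangle=-\langle F,[X,Y]\rangle$, because ${A_{(\exp(-tX))}}_{*}$ is the one-parameter group of automorphisms of $\mathcal{G}$ whose generator is $-\mathrm{ad}\,X$. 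Hence $X\in\mathcal{G}_F$ iff $\langle F,[X,Y]\rangle=0$ for all $Y\in\mathcal{G}$, which is exactly the condition $B_F(X,Y)=0$ for all $Y$, i.e.\ $X\in\ker B_F$. Conversely any $X\in\ker B_F$ satisfies the differentiated identity, which after integration along the one-parameter subgroup yields $\exp(tX)\in G_F$, so $X\in\mathcal{G}_F$.

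Finally, since $B_F$ is a bilinear form on the finite-dimensional space $\mathcal{G}$, the rank-nullity relation gives $\mathrm{rank}\,B_F=\dim\mathcal{G}-\dim\ker B_F=\dim\mathcal{G}-\dim\mathcal{G}_F$, which combined with the first step closes the chain of equalities. The only delicate point, and the step I would check most carefully, is the sign and identification in the differentiation of $K$: one must be sure that differentiating $A_{(g^{-1})_{*}}$ at the identity reproduces $-\mathrm{ad}$ rather than $+\mathrm{ad}$, since this is what makes the stabiliser condition land on the skew-symmetric form $B_F$ rather than on a twisted variant. Everything else is formal.
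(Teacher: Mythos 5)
Your argument is correct: the orbit--stabiliser identification $\Omega_F\cong G/G_F$, the differentiation of the stabiliser condition to get $\mathcal{G}_F=\ker B_F$ (with the generator of $t\mapsto {A_{(\exp(-tX))}}_{*}$ being $-\mathrm{ad}\,X$, so the sign works out), and rank--nullity together give all three equalities. The paper itself offers no proof of this proposition --- it is quoted from Kirillov [2, Section 15.1] --- and your argument is precisely the standard one found there, so there is nothing to compare beyond noting that you have supplied the omitted details correctly.
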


\subsection{MD-groups and MD-algebras}
\begin{definition}[see {[1, Chapter 2]}] An MD-group is a real solvable Lie group of finite dimension such that its K-orbits are orbits of dimension zero or maximal dimension (i.e. dimension k, where k is some even constant and no more than the dimension of the considered group). When the dimension of considered group is n (n is a some positive integer), the group is called an MDn-group. The Lie algebra of an MD-group (MDn-group, respectively) is called an MD-algebra (MDn-algebra, respectively).
\end{definition}
The following proposition gives a necessary condition for a Lie algebra belonging to the class of MD-algebras.
\begin{proposition}[{see [3, Theorem 4]}] Let $\mathcal{G}$ be an MD-algebra.
Then its second derived ideal ${\mathcal{G}}^{2} := [[\mathcal{G},
\mathcal{G}], [\mathcal{G}, \mathcal{G}]]$ is commutative.
\hfill{$\square$}
\end{proposition}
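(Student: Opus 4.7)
The plan is to argue by contradiction: assume $\mathcal{G}^2 = [\mathcal{G}^1,\mathcal{G}^1]$ is non-commutative, and exhibit two K-orbits of \emph{distinct positive dimensions}, which violates Definition 1.2.1 (MD demands that every non-trivial K-orbit has the same even dimension $k$). Two preparatory observations I would establish first: by the Jacobi identity one checks $[\mathcal{G}^2, \mathcal{G}] \subseteq \mathcal{G}^2$, so $\mathcal{G}^2$ is an ideal of $\mathcal{G}$; and the solvability of $\mathcal{G}$ together with $\mathcal{G}^2 \neq 0$ forces a strict chain $\mathcal{G} \supsetneq \mathcal{G}^1 \supsetneq \mathcal{G}^2$ (otherwise the derived series would stabilize at a non-zero term).

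For a \textbf{small positive-dimensional orbit}, I would choose $F_1 \in \mathcal{G}^*$ vanishing on $\mathcal{G}^2$ but not on $\mathcal{G}^1$; such $F_1$ exists by the strict inclusion $\mathcal{G}^2 \subsetneq \mathcal{G}^1$. For any $X \in \mathcal{G}^2$ and any $Y \in \mathcal{G}$ one has $[X,Y] \in \mathcal{G}^2$, so $B_{F_1}(X,Y) = \langle F_1,[X,Y]\rangle = 0$. Thus $\mathcal{G}^2 \subseteq \ker B_{F_1} = \mathcal{G}_{F_1}$, and Proposition 1.1.3 gives $\dim \Omega_{F_1} \leq \dim \mathcal{G} - \dim \mathcal{G}^2$. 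Meanwhile $F_1|_{\mathcal{G}^1} \neq 0$ means $B_{F_1} \not\equiv 0$, so $\dim \Omega_{F_1} > 0$.

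For a \textbf{strictly larger orbit}, I would exploit the assumed non-commutativity of $\mathcal{G}^2$: pick $X_0, Y_0 \in \mathcal{G}^2$ with $Z_0 = [X_0, Y_0] \neq 0$, and choose $F_2 \in \mathcal{G}^*$ with $F_2(Z_0) \neq 0$. Then $B_{F_2}(X_0, Y_0) \neq 0$, so $X_0 \notin \mathcal{G}_{F_2}$ and in particular $\mathcal{G}^2 \not\subseteq \mathcal{G}_{F_2}$. The \emph{main obstacle}, and the technical heart of the proof, is to promote this non-inclusion into the strict numerical bound $\dim \mathcal{G}_{F_2} < \dim \mathcal{G}^2$, which would yield $\dim \Omega_{F_2} > \dim \mathcal{G} - \dim \mathcal{G}^2 \geq \dim \Omega_{F_1}$ and complete the contradiction.

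To overcome this obstacle I would select $F_2$ to be generic (in the Zariski sense on $\mathcal{G}^*$), using the lower semicontinuity of $F \mapsto \mathrm{rank}\, B_F$ to reduce to the generic-orbit case, and then filter the stabilizer through the derived chain $\mathcal{G} \supseteq \mathcal{G}^1 \supseteq \mathcal{G}^2 \supseteq [\mathcal{G}^2,\mathcal{G}^2] \supseteq \cdots$, arguing inductively along this filtration that the coadjoint differential $\mathrm{ad}^*(\cdot)F_2 : \mathcal{G} \to \mathcal{G}^*$ has image transverse to each $(\mathcal{G}^{i})^{\perp}$ as soon as $F_2$ is non-vanishing on $[\mathcal{G}^2,\mathcal{G}^2]$. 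Packaging this into a clean dimension count is the delicate part; once it is done, the comparison with the bound of the previous paragraph produces two distinct positive K-orbit dimensions, contradicting the MD hypothesis.
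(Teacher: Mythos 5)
First, a point of reference: the paper does not prove this proposition at all --- it is quoted from [3, Theorem 4] with the proof omitted --- so your attempt has to be judged on its own merits rather than against an internal argument. The first half of what you write is correct and is a natural opening: since $\mathcal{G}^2$ is an ideal and solvability forces $\mathcal{G}^2\subsetneq\mathcal{G}^1$, a functional $F_1$ vanishing on $\mathcal{G}^2$ but not on $\mathcal{G}^1$ exists and satisfies $\mathcal{G}^2\subseteq\ker B_{F_1}=\mathcal{G}_{F_1}$ and $B_{F_1}\neq 0$, whence $0<\dim\Omega_{F_1}\le\dim\mathcal{G}-\dim\mathcal{G}^2$. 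The genuine gap is the second half, and it is not a deferrable technicality. The inequality you need, $\dim\mathcal{G}_{F_2}<\dim\mathcal{G}^2$ for generic $F_2$, is \emph{false} as a consequence of non-commutativity of $\mathcal{G}^2$ alone: take any solvable $\mathcal{G}_0$ with $[\mathcal{G}_0^2,\mathcal{G}_0^2]\neq 0$ (e.g.\ the upper triangular real $5\times 5$ matrices) and set $\mathcal{G}=\mathcal{G}_0\oplus\mathbb{R}^N$; then $\mathcal{G}^2=\mathcal{G}_0^2$ is non-commutative of fixed dimension, while the central factor $\mathbb{R}^N$ lies in $\ker B_F$ for every $F$, so $\dim\mathcal{G}_F\ge N>\dim\mathcal{G}^2$ for all $F$ once $N$ is large. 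Hence no genericity or transversality argument along the derived filtration can produce that bound without invoking the MD hypothesis; and once the MD hypothesis is invoked, the bound directly contradicts $\dim\mathcal{G}_{F_2}=\dim\mathcal{G}_{F_1}\ge\dim\mathcal{G}^2$ (both stabilizers are minimal under MD), so proving it \emph{is} the whole theorem. The step you label ``delicate'' is where the entire content lives, and the sketch offered for it cannot work as stated.

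A way to actually close the argument while keeping your first half: under the MD hypothesis, $\mathrm{rank}\,B_F$ takes the maximal value $2m$ for every $F$ that does not vanish on $\mathcal{G}^1$ (Proposition 1.2.3, together with the observation that $B_F=0$ if and only if $F|_{\mathcal{G}^1}=0$). In particular your $F_1$ already has a stabilizer $\mathcal{G}_{F_1}$ of \emph{minimal} dimension $\dim\mathcal{G}-2m$. By the Duflo--Vergne theorem, a coadjoint stabilizer of minimal dimension is a commutative subalgebra. Since $\mathcal{G}^2\subseteq\mathcal{G}_{F_1}$, the ideal $\mathcal{G}^2$ is commutative, and no second functional $F_2$ or comparison of two orbit dimensions is needed.
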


We point out here that the converse of the above result is in general not true. In other words, the above necessary condition is not a sufficient
condition. So, we now only consider the real solvable Lie algebras
having commutative second derived ideals. Thus, they could be MD-algebras.

\begin{proposition}[see{ [1, Chapter 2, Proposition 2.1]}] Let \,
$\cal{G}$\, be an MD-algebra with F (in ${\cal{G}}^{*}$) is
not vanishing perfectly in ${\cal{G}}^{1}: = [\mathcal{G}, \mathcal{G}]$, i.e. there exists $U \in {\cal{G}}^{1}$ such that $\langle F, U \rangle \neq 0.$ Then
the K-orbit ${\Omega}_{F}$ is one of the K-orbits having maximal dimension. \hfill{$\square$}
\end{proposition}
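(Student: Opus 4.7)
The plan is to argue by contradiction, exploiting the dichotomy built into the definition of an MD-algebra together with the rank formula in Proposition~1.1.1.

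By Definition~1.2.1, since $\mathcal{G}$ is an MD-algebra the K-orbit $\Omega_F$ has either dimension $0$ or the fixed maximal even dimension $k$. So to conclude that $\Omega_F$ is of maximal dimension it suffices to rule out $\dim \Omega_F = 0$. I would assume for contradiction that $\dim \Omega_F = 0$ and derive that $F$ must vanish on all of $\mathcal{G}^1$, contradicting the hypothesis that $\langle F, U\rangle \neq 0$ for some $U \in \mathcal{G}^1$.

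The key step is to translate $\dim\Omega_F = 0$ into an algebraic condition on $F$ via Proposition~1.1.1. That proposition gives $\dim \Omega_F = \operatorname{rank} B_F$, where $B_F(X,Y) = \langle F, [X,Y]\rangle$. Hence $\dim \Omega_F = 0$ forces $B_F \equiv 0$ on $\mathcal{G} \times \mathcal{G}$, i.e.\ $\langle F, [X,Y]\rangle = 0$ for every $X, Y \in \mathcal{G}$. Since $\mathcal{G}^1 = [\mathcal{G},\mathcal{G}]$ is by definition the linear span of all commutators $[X,Y]$, linearity of $F$ then gives $\langle F, Z\rangle = 0$ for every $Z \in \mathcal{G}^1$, so $F$ vanishes identically on $\mathcal{G}^1$. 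This directly contradicts the standing assumption that there is $U \in \mathcal{G}^1$ with $\langle F, U\rangle \neq 0$.

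There is essentially no technical obstacle here: the statement is a one-line consequence of the rank formula and the definition of an MD-algebra, and the only thing that requires care is noting that $\mathcal{G}^1$ is spanned (not merely generated as a Lie ideal) by brackets, so that the vanishing of $B_F$ on generators transfers to the whole derived ideal by linearity of $F$. Once that is observed, combining $\dim\Omega_F \neq 0$ with the MD-dichotomy immediately yields $\dim\Omega_F = k$, i.e.\ $\Omega_F$ is a K-orbit of maximal dimension.
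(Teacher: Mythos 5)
Your argument is correct and complete: the rank formula $\dim\Omega_F=\operatorname{rank}B_F$ from Proposition~1.1.1 shows that $\dim\Omega_F=0$ would force $F$ to vanish on the linear span of all brackets, i.e.\ on all of $\mathcal{G}^1$, contradicting the hypothesis, so the MD-dichotomy leaves only the maximal dimension. The paper itself gives no proof of this proposition (it is quoted from reference [1]), and your argument is exactly the standard one-line justification one would expect there.
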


\section{THE CLASS OF MD5-ALGEBRAS\\
HAVING NON-COMMUTATIVE\\
DERIVED IDEALS}

\subsection{Some Results on the Class of MD-algebras}
In this subsection, we shall present some results on general MDn-algebras ($n \ge 4$).

Firstly, we consider a real solvable Lie algebra $\cal{G}$ of dimension $n$ such that $dim{\cal G}^{1}= n - k$\,($k$ is some integer constant, $1\leq k<n$), ${\cal G}^{2}$ is non - trivial commutative and $dim{\cal G}^{2} = dim{\cal G}^{1}-1 = n - k - 1$.
Without loss of generality, we may assume that
\begin{description}
 \item[]\hskip3cm ${\cal G}\,\,\,= gen\left(X_1, X_2,..., X_n \right),\, (n \ge 4)$,
 \item[]\hskip3cm ${{\cal G}^1} = gen\left(X_{k+1}, X_{k+2},..., X_n \right),\,
 (n > k \ge 1)$,
 \item[]\hskip3cm ${{\cal G}^2} = gen\left(X_{k + 2},..., X_n \right)$,
\end{description} 
with the Lie brackets are given by  
$$\left[X_i, X_j \right] = 
\sum\limits_{l = k+1}^n {C_{ij}^l{X_l}},\,1 \le i < j \le n,$$
where $C_{ij}^l \left({1 \le i < j \le n}, k+1 \le l \le n\right)$ are constructional constants of $\cal{G}$.
\begin{theorem} There is no MD-algebra $\cal G$ such that its second derived ideal ${\cal G}^{2}$ is not trivial and less than its first derived ideal ${\cal G}^{1}$  by one dimension: $dim{\cal G}^{2} = dim{\cal G}^{1}-1.$
\end{theorem}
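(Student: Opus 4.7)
Plan: I proceed by contradiction. Assuming such an MD-algebra $\mathcal{G}$ exists, I exhibit a functional $F \in \mathcal{G}^{*}$ whose K-orbit dimension lies strictly between $0$ and the maximal orbit dimension—this violates the MD condition via Proposition 1.2.3. The crucial auxiliary object is the restriction $T := \mathrm{ad}_{X_{k+1}}\big|_{\mathcal{G}^{2}}$, which the hypotheses force to be a linear automorphism of $\mathcal{G}^{2}$. Indeed, commutativity of $\mathcal{G}^{2}$ kills all brackets among $X_{k+2}, \ldots, X_{n}$, and since $[\mathcal{G}^{1}, \mathcal{G}^{1}] = \mathcal{G}^{2}$ is $(n-k-1)$-dimensional while $\mathcal{G}^{1}/\mathcal{G}^{2}$ is spanned by the class of $X_{k+1}$, the brackets $[X_{k+1}, X_{j}]$ with $j \geq k+2$ must already span all of $\mathcal{G}^{2}$, forcing $T$ to be surjective. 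Commutativity also shows that $\mathcal{G}^{2}$ is $B_{F}$-isotropic for every $F$, giving the uniform bound $\mathrm{rank}\, B_{F} \leq 2(k+1)$.

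First I would take $F_{1} := X_{k+1}^{*}$. Since $F_{1}(X_{k+1}) = 1$, $F_{1}$ does not vanish on $\mathcal{G}^{1}$, while $F_{1}$ does vanish on $\mathcal{G}^{2}$. Because $[X, Y] \in \mathcal{G}^{2} \subseteq \ker F_{1}$ for all $X, Y \in \mathcal{G}^{1}$, the whole ideal $\mathcal{G}^{1}$ is itself $B_{F_{1}}$-isotropic, which sharpens the bound to $\mathrm{rank}\, B_{F_{1}} \leq 2(\dim \mathcal{G} - \dim \mathcal{G}^{1}) = 2k$. By Proposition 1.2.3, $\dim \Omega_{F_{1}}$ equals the maximal K-orbit dimension $d$; hence $d \leq 2k$, and clearly $d > 0$, since otherwise $\mathcal{G}$ would be abelian, contradicting $\mathcal{G}^{2} \neq \{0\}$.

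Next I bring in the companion endomorphisms $T_{j} := \mathrm{ad}_{X_{j}}\big|_{\mathcal{G}^{2}}$ for $j = 1, \ldots, k$ and split into cases. \emph{Case A:} $T, T_{1}, \ldots, T_{k}$ are linearly independent in $\mathrm{End}(\mathcal{G}^{2})$. For a generic $F$ whose restriction to $\mathcal{G}^{2}$ is in general position, the pulled-back functionals $F \circ T, F \circ T_{1}, \ldots, F \circ T_{k}$ on $\mathcal{G}^{2}$ remain linearly independent, and a direct computation of $\ker B_{F}$ yields $\mathrm{rank}\, B_{F} = 2(k+1) > 2k \geq d$; since $F$ is non-vanishing on $\mathcal{G}^{1}$, this contradicts Proposition 1.2.3. \emph{Case B:} $T, T_{1}, \ldots, T_{k}$ are linearly dependent; one then extracts a nonzero element $X' \in \mathcal{G} \setminus \mathcal{G}^{1}$ commuting with $\mathcal{G}^{2}$—namely $X' := X_{k+1} - \sum_{j} a_{j} X_{j}$ if $T = \sum_{j} a_{j} T_{j}$, or some nonzero $\sum_{j} b_{j} X_{j} \in \langle X_1,\ldots,X_k\rangle$ if already the $T_{j}$'s alone are dependent. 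Feeding $X'$ into the Jacobi identity for the triples $(X', X_{k+1}, X_{l})$, $(X', X_{i}, X_{l})$ with $l \geq k+2$, and $(X', X_{i}, X_{j})$ with $i, j \leq k$, and exploiting the invertibility of $T$ to strip off the $X_{k+1}$-coefficients of the resulting identities, one shows that every bracket of $\mathcal{G}$ lies in $\mathcal{G}^{2}$, so $\mathcal{G}^{1} \subseteq \mathcal{G}^{2}$; this contradicts $\dim \mathcal{G}^{1} = \dim \mathcal{G}^{2} + 1$.

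The main technical obstacle is the Jacobi bookkeeping in Case B. For $k = 1$ the argument is clean: a single Jacobi relation on $(X', X_{1}, X_{l})$ yields that the $X_{k+1}$-component of $[X_{1}, X_{k+1}]$ vanishes, and combined with the defining relation for $X'$ this collapses every bracket into $\mathcal{G}^{2}$. For general $k$ one has to work with the coupled identities $[T_{i}, T_{j}] = c_{ij} T$ (from Jacobi on $(X_{i}, X_{j}, X_{l})$) together with $d_{i} = \sum_{j} a_{j} c_{ij}$ (from Jacobi on $(X', X_{i}, X_{l})$), where $c_{ij}$ and $d_{i}$ denote the $X_{k+1}$-components of $[X_{i}, X_{j}]$ and $[X_{i}, X_{k+1}]$, respectively, and to use the invertibility of $T$ inductively to force every $c_{ij}$ and $d_{i}$ to vanish.
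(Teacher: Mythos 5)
Your opening moves are sound and match the paper's: the observation that $T:=\mathrm{ad}_{X_{k+1}}\big|_{{\cal G}^2}$ is an automorphism is exactly the paper's Lemma 2.1.2, and using $F_1=X_{k+1}^*$ (for which all of ${\cal G}^1$ is $B_{F_1}$-isotropic) to bound the maximal orbit dimension by $2k$ is correct. The genuine gap is in your Case A. Linear independence of $T,T_1,\dots,T_k$ in $\mathrm{End}({\cal G}^2)$ does \emph{not} imply that some (let alone a generic) $F$ makes $F\circ T,F\circ T_1,\dots,F\circ T_k$ linearly independent. For example, fix a covector $\xi$ on ${\cal G}^2$ and independent vectors $v_1,v_2\in{\cal G}^2$, and let $T_1,T_2$ be the rank-one operators $w\mapsto\xi(w)v_1$ and $w\mapsto\xi(w)v_2$: these are linearly independent, yet $F\circ T_i=F(v_i)\,\xi$ are proportional for \emph{every} $F$. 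Moreover, whenever $k+1>\dim{\cal G}^2=n-k-1$ the $k+1$ pulled-back functionals live in a space of dimension smaller than $k+1$ and can never be independent, although the operators themselves may still be independent in $\mathrm{End}({\cal G}^2)$. Since your dichotomy is keyed to independence of the \emph{operators}, all such situations land in Case A, where your argument then yields nothing, and they are not rescued by Case B. Closing this hole requires feeding the structural relations $[T_i,T_j]=c_{ij}T$ and $[T_i,T]=d_iT$ into Case A as well, which you do not do.

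A second gap: in Case B with $k\ge 2$, the claim that every $c_{ij}$ and $d_i$ vanishes is only announced (``use the invertibility of $T$ inductively''); no induction is carried out, and your clean $k=1$ computation does not extend verbatim. (The claim is in fact true, and unconditionally: the image of ${\cal G}$ in $\mathfrak{gl}({\cal G}^2)$ is solvable, so by Lie's theorem its derived algebra acts nilpotently, while $c_{ij}T$ and $d_iT$ lie in that derived algebra and $T$ is invertible; but that argument needs neither your dichotomy nor the dependence hypothesis, and it is not what you wrote.) For contrast, the paper sidesteps the whole case analysis: it first normalizes the basis so that $[X_i,X_{k+1}]=0$ and $[X_i,X_j]\in\mathbb{R}X_{k+1}$ for $i,j\le k$ (Lemmas 2.1.3 and 2.1.4, proved from the Jacobi identity and the invertibility of $T$), and then compares the single pair $F_1=X_{k+1}^*$ and $F_2=X_{k+1}^*+\sum_l\alpha_lX_l^*$, the $\alpha_l$ being chosen via the invertibility of $T$ so that $B_{F_2}$ acquires an extra hyperbolic plane in the directions $X_{k+1},X_{k+2}$; this gives $\mathrm{rank}\,B_{F_2}\ge\mathrm{rank}\,B_{F_1}+2$ with both functionals non-vanishing on ${\cal G}^1$, contradicting Proposition 1.2.3. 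You should either adopt such a normalization or supply the missing arguments in both of your cases.
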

In order to prove this theorem, we need some lemmas.
\begin{lemma}The operator $ad_{X_{k+1}}$ restricted on ${\cal G}^2$ is an automorphism.
\end{lemma}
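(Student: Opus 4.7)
The plan is to show surjectivity of $\mathrm{ad}_{X_{k+1}}|_{\mathcal{G}^2}$ and then invoke the fact that $\mathcal{G}^2$ is finite-dimensional to conclude bijectivity. The key observation driving everything is that the commutativity of $\mathcal{G}^2$, together with the dimension assumption, forces $\mathcal{G}^2$ to be generated entirely by brackets involving $X_{k+1}$.

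First I would verify that $\mathrm{ad}_{X_{k+1}}$ really does send $\mathcal{G}^2$ into itself, so that the restriction is a well-defined linear endomorphism of $\mathcal{G}^2$. This is immediate because $\mathcal{G}^2=[\mathcal{G}^1,\mathcal{G}^1]$ is a characteristic ideal of $\mathcal{G}$, so in particular $[X_{k+1},\mathcal{G}^2]\subseteq\mathcal{G}^2$.

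The main step is the surjectivity. By definition, $\mathcal{G}^2$ is spanned by the brackets $[X_i,X_j]$ with $k+1\le i<j\le n$. Whenever both indices satisfy $i,j\ge k+2$, the vectors $X_i,X_j$ lie in $\mathcal{G}^2$, and since $\mathcal{G}^2$ is assumed commutative, these brackets vanish. Hence the only possibly nonzero generators of $\mathcal{G}^2$ are
\[
[X_{k+1},X_{k+2}],\ [X_{k+1},X_{k+3}],\ \ldots,\ [X_{k+1},X_n],
\]
that is, $\mathrm{ad}_{X_{k+1}}(X_{k+2}),\ldots,\mathrm{ad}_{X_{k+1}}(X_n)$. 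Thus the image of the basis $\{X_{k+2},\ldots,X_n\}$ of $\mathcal{G}^2$ under $\mathrm{ad}_{X_{k+1}}$ already spans $\mathcal{G}^2$, so the restriction is surjective.

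Once surjectivity is in hand, the conclusion is automatic: a surjective linear endomorphism of a finite-dimensional vector space is bijective, so $\mathrm{ad}_{X_{k+1}}|_{\mathcal{G}^2}$ is an automorphism. I do not anticipate any real obstacle here; the only subtlety is recognizing that the commutativity of $\mathcal{G}^2$ kills exactly the generators of $\mathcal{G}^2$ that do not come from bracketing against $X_{k+1}$, so the dimension hypothesis $\dim\mathcal{G}^2=\dim\mathcal{G}^1-1$ is precisely what is needed to match a basis of $\mathcal{G}^2$ with a spanning set of its image.
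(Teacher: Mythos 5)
Your proof is correct and follows essentially the same route as the paper: both arguments observe that commutativity of ${\cal G}^2$ kills all generators $[X_i,X_j]$ with $i,j\ge k+2$, so ${\cal G}^2=[{\cal G}^1,{\cal G}^1]$ is spanned by $ad_{X_{k+1}}(X_{k+2}),\dots,ad_{X_{k+1}}(X_n)$, whence the restriction is surjective and therefore bijective in finite dimension. Your version merely makes explicit two points the paper leaves implicit (that the restriction is well defined and that surjectivity implies bijectivity), which is a harmless refinement rather than a different approach.
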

\begin{proof}
Since ${\cal G}^2$  is commutative, $\left[ {{X_i},{X_j}} \right] = 0,\,\,\forall i,j \ge k + 2$. Hence, 
$$\begin{array}{l}
\quad\, gen\left({X_{k + 2}},{X_{k + 3}}, \cdots ,{X_n}\right)
= {{\cal G}^2} = \left[ {{{\cal G}^1},{{\cal G}^1}} \right]\\ 
= gen\left(\left[X_{k + 1},X_{k + 2} \right],\left[X_{k + 1},X_{k + 3} \right],
\cdots ,\left[X_{k + 1},X_n \right]\right)\\ 
= gen\left(ad_{X_{k + 1}}\left(X_{k + 2}\right),
\cdots, ad_{X_{k + 1}}\left(X_n \right)\right).
\end{array}$$
It follows that $ad_{X_{k+1}}$ restricted on ${\cal G}^2$ is automorphic.
\end{proof} 
\begin{lemma} Without any restriction of generality, we can always suppose right from the start that $\left[X_i,X_{k + 1} \right] = 0$ for all indices $i$ such that $1 \le i \le k$.
\end{lemma}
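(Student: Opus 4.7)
The plan is to decompose $[X_i,X_{k+1}]$ along the splitting $\mathcal{G}^{1}=\mathbb{R}X_{k+1}\oplus\mathcal{G}^{2}$, use the Jacobi identity to force the $X_{k+1}$-component to vanish, and then absorb the residual $\mathcal{G}^{2}$-component by a harmless shift of $X_{i}$ that exploits the invertibility of $\operatorname{ad}_{X_{k+1}}|_{\mathcal{G}^{2}}$ supplied by the preceding lemma.

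For $1\le i\le k$, the ideal property of $\mathcal{G}^{1}$ gives $[X_i,X_{k+1}]\in\mathcal{G}^{1}$, so I write $[X_i,X_{k+1}]=\alpha_{i}X_{k+1}+w_{i}$ with $\alpha_{i}\in\mathbb{R}$ and $w_{i}\in\mathcal{G}^{2}$. A short Jacobi check shows that $\mathcal{G}^{2}=[\mathcal{G}^{1},\mathcal{G}^{1}]$ is in fact an ideal of $\mathcal{G}$, so the operators $A:=\operatorname{ad}_{X_{k+1}}|_{\mathcal{G}^{2}}$ and $B:=\operatorname{ad}_{X_i}|_{\mathcal{G}^{2}}$ both map $\mathcal{G}^{2}$ into itself. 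Restricting the identity $\operatorname{ad}_{[X_i,X_{k+1}]}=[\operatorname{ad}_{X_i},\operatorname{ad}_{X_{k+1}}]$ to $\mathcal{G}^{2}$ yields $\alpha_{i}A+\operatorname{ad}_{w_{i}}|_{\mathcal{G}^{2}}=BA-AB$; since $w_{i}\in\mathcal{G}^{2}$ and $\mathcal{G}^{2}$ is commutative, the middle term vanishes, leaving $\alpha_{i}A=BA-AB$. By the previous lemma $A$ is invertible, so this rearranges to $ABA^{-1}=B-\alpha_{i}I$. Taking traces gives $\operatorname{tr}(B)=\operatorname{tr}(B)-\alpha_{i}\dim\mathcal{G}^{2}$, and the non-triviality of $\mathcal{G}^{2}$ forces $\alpha_{i}=0$.

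Once $\alpha_{i}=0$ we have $[X_i,X_{k+1}]=w_{i}\in\mathcal{G}^{2}$. Using $A^{-1}$, I pick $u_{i}\in\mathcal{G}^{2}$ with $[X_{k+1},u_{i}]=w_{i}$ and set $X'_{i}:=X_i+u_{i}$; then
$$[X'_{i},X_{k+1}]=[X_i,X_{k+1}]+[u_{i},X_{k+1}]=w_{i}-[X_{k+1},u_{i}]=0.$$
Because $u_{i}\in\mathcal{G}^{2}\subset\mathcal{G}^{1}$, the substitution $X_i\mapsto X'_i$ preserves the distinguished bases of $\mathcal{G}^{1}$ and $\mathcal{G}^{2}$ as well as the complementary span $\operatorname{span}(X_1,\dots,X_k)$; carrying it out simultaneously for $1\le i\le k$ produces the basis asserted by the lemma.

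The main obstacle is the elimination of $\alpha_{i}$: a linear change inside $\operatorname{span}(X_1,\dots,X_k)$ alone can only reduce the vector $(\alpha_1,\dots,\alpha_k)$ to a single non-zero coordinate, so without the Jacobi-plus-trace argument above one could not simultaneously kill every $\alpha_{i}$. The non-triviality of $\mathcal{G}^{2}$ and the invertibility of $\operatorname{ad}_{X_{k+1}}|_{\mathcal{G}^{2}}$ are precisely what is needed to force each individual $\alpha_{i}$ to vanish before the basis shift completes the reduction.
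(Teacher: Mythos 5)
Your proof is correct and follows essentially the same route as the paper's: decompose $[X_i,X_{k+1}]$ into its $X_{k+1}$- and $\mathcal{G}^2$-components, absorb the $\mathcal{G}^2$-part by shifting $X_i$ using the invertibility of $\operatorname{ad}_{X_{k+1}}|_{\mathcal{G}^2}$, and kill the $X_{k+1}$-coefficient via the Jacobi commutator identity. The only differences are cosmetic: you perform the two steps in the opposite order, treat all $i$ simultaneously, and spell out explicitly (via the trace of $ABA^{-1}=B-\alpha_i I$) the step the paper compresses into ``since $\operatorname{ad}_{X_{k+1}}$ is automorphic, $\alpha$ must be zero.''
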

\begin{proof} 
Firstly, we remark that  $\left[X_1,X_{k + 1} \right] \in {\cal G}^1$, so there exists $X\in{\cal G}^2$  such that $\left[X_1,X_{k + 1} \right] 
= C_{1,k + 1}^{k + 1}X_{k + 1} + X$.
Since ${ad_{X_{k + 1}}}$ restricted on ${\cal G}^2$  is automorphic, there exists 
$Y\in {\cal G}^2$ such that $ad_{X_{k + 1}}\left(Y \right) = X$.

By changing $X'_1 = X_1 + Y$, we get  $\left[X'_1, X_{k + 1} \right] 
= C_{1,k + 1}^{k + 1}{X_{k + 1}}$. Using the Jacobi identity for $X'_1, X_{k + 1}$, and an arbitrary element $Z \in {\cal G}^2$ we obtain $ad_{X_1}ad_{X_{k+1}}-ad_{X_{k+1}}ad_{X_1}=\alpha ad_{X_{k+1}}$, where $\alpha$ is some real constant. Since $ad_{X_{k+1}}$ is automorphic on ${\cal G}^2$, $\alpha$ must be zero. Therefore, $C_{1,k + 1}^{k + 1} = 0$, i.e. $\left[X'_1, X_{k + 1} \right] = 0$. So, we can suppose that $\left[X_1,X_{k + 1} \right] = 0$.

By the same way, we can suppose  
$\left[X_2,X_{k + 1}\right] = \cdots = \left[X_k,X_{k + 1}\right] = 0.$
\end{proof}
\begin{lemma}
$\left[X_i,X_j \right] = C_{ij}^{k + 1}X_{k+1}$ for all pairs of indices i, j such that $1 \le i < j \le k$.
\end{lemma}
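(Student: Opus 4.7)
The plan is to exploit the Jacobi identity together with the two previous lemmas. I already know from Lemma 2 that $[X_i, X_{k+1}] = 0$ for every $1 \le i \le k$, and from Lemma 1 that $\text{ad}_{X_{k+1}}$ restricted to $\mathcal{G}^2$ is an automorphism. These are the two ingredients I will combine.

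First, for any $1 \le i < j \le k$, the bracket $[X_i, X_j]$ lies in $\mathcal{G}^1 = \text{gen}(X_{k+1}, X_{k+2}, \ldots, X_n)$, so I can write
$$[X_i, X_j] = C_{ij}^{k+1} X_{k+1} + Z, \qquad Z := \sum_{l=k+2}^{n} C_{ij}^{l} X_l \in \mathcal{G}^2.$$
My goal is to show that $Z = 0$.

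Next, I apply the Jacobi identity to the triple $(X_i, X_j, X_{k+1})$:
$$[[X_i, X_j], X_{k+1}] + [[X_j, X_{k+1}], X_i] + [[X_{k+1}, X_i], X_j] = 0.$$
Thanks to Lemma 2, the last two terms vanish, so $\text{ad}_{X_{k+1}}([X_i, X_j]) = 0$. Expanding using the decomposition above and noting that $[X_{k+1}, X_{k+1}] = 0$, I get $\text{ad}_{X_{k+1}}(Z) = 0$. Since $Z \in \mathcal{G}^2$ and $\text{ad}_{X_{k+1}}$ is an automorphism of $\mathcal{G}^2$ by Lemma 1, this forces $Z = 0$, and the statement follows.

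I do not anticipate a substantial obstacle here: the lemma is essentially a direct corollary of Lemmas 1 and 2. The only small subtlety to keep straight is the index bookkeeping (making sure the bracket $[X_i, X_j]$ is expanded correctly in the basis and that the $X_{k+1}$-coefficient is unaffected by applying $\text{ad}_{X_{k+1}}$), but no new idea is needed beyond the Jacobi identity together with the automorphism property already established.
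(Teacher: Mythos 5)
Your proof is correct and follows essentially the same route as the paper: expand $[X_i,X_j]$ in the basis, apply the Jacobi identity to $(X_i, X_j, X_{k+1})$ so that Lemma 2 kills two terms, and then use the automorphism property of $ad_{X_{k+1}}$ on ${\cal G}^2$ from Lemma 1 to conclude the ${\cal G}^2$-component vanishes. No differences worth noting.
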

\begin{proof}
Consider an arbitrary pair of indices $i, j$ such that $1 \le i < j \le k$. Note that 
$\left[X_i, X_j \right] = \sum\limits_{l = k+1}^n {C_{ij}^l{X_l}} 
= C_{ij}^{k + 1}X_{k+1} + \sum\limits_{l = k+2}^n {C_{ij}^l{X_l}}$. 
By using the Jacobi identity, we have 
\begin{description}
   \item[]\qquad $\left[X_i,\left[X_j, X_{k + 1} \right] \right] + \left[X_j,\left[X_{k + 1}, X_i \right] \right] + \left[X_{k + 1},\left[X_i, X_j \right]\right] = 0$
   \item[]$\Longrightarrow\left[X_{k + 1},\left[X_i, X_j\right]\right] 
   = \left[X_{k + 1},\,C_{ij}^{k + 1}X_{k+1} + \sum\limits_{l = k+2}^n {C_{ij}^l{X_l}}\right]= 0$
   \item[]$\Longrightarrow ad_{X_{k+1}}{\left(\sum\limits_{l = k + 2}^n {C_{ij}^l{X_l}} \right)} = 0$
   \item[]$\Longrightarrow\sum\limits_{l = k + 2}^n {C_{ij}^l{X_l}}  = 0$
   (because $ad_{X_{k + 1}}$ is automorphic on ${\cal{G}}^2$)
   \item[]$\Longrightarrow\left[ {{X_i},{X_j}} \right] = C_{ij}^{k + 1}{X_{k + 1}}; 
   1 \le i < j \le k$.
\end{description}
\end{proof} 
We now prove Theorem 2.1.1. Namely, we will prove that if $\cal G$  is a real solvable Lie algebra such that ${\cal G}^{2}$  is non - trivial commutative and $dim{\cal G}^{2} = dim{\cal G}^{1}-1 = n-k-1,\,1\leq k<n$, then $\cal G$ is not an MD-algebra.
\vskip1cm
\begin{prove} \end{prove}
According to above lemmas, we can choose a suitable basis $\left(X_1, X_2,\cdots, X_n\right)$ of $\cal G$ which satisfies the following conditions: 
\begin{description}
   \item[]\hskip3cm $\left[X_i,X_j \right] = C_{ij}^{k + 1}X_{k + 1}, \,\,1 \le i < j
          \le k$;
   \item[]\hskip3cm $\left[X_i,X_{k+1} \right] = 0, \,\,1 \le i \le k$;
   \item[]\hskip3cm $\left[X_i,X_j \right] = \sum\limits_{l = k+2}^n
          {C_{ij}^l{X_l}}, \,\,1 \le i \le k+1,\,k+2 \le j \le n$.
\end{description}  

Moreover, the constructional constants $C_{ij}^{k + 1}$ can not concomitantly vanish and the matrix $A = {\left( C_{j, k+1}^l \right)}_{k+2 \le j,l \le n}$ is invertible because $ad_{X_{k+1}}$ restricted on ${\cal{G}}^2$ is automorphic.

Since A is invertible, there exist $\alpha_{k + 2},\cdots,\alpha_n \in {\mathbb {R}}$, which are not concomitantly vanished, such that 
$$A\left[\begin{array}{l}
\alpha_{k+2}\\ \,\,\,\,\vdots\\\alpha_n
\end{array}\right]
= \left[\begin{array}{l}1\\0\\ \vdots\\0
\end{array}\right] \in {\mathbb{R}}^{n - k - 1}.$$
Let $\left(X_1^*, X_2^*,\cdots, X_n^*\right)$ is the dual basis in ${\cal G}^*$ of 
$\left(X_1, X_2,\cdots, X_n\right)$. We choose   
${F_1} = X_{k+1}^*$ and ${F_2}= X_{k+1}^*+\alpha_{k+2}X_{k+2}^*
+\cdots+{\alpha_n}X_n^*$ in ${\cal G}^*$ .
It can easily be seen that $F_1, F_2$ are not perfectly vanishing
in ${\mathcal{G}}^{1}$. In the view of Proposition 1.2.3, if $\cal G$ is an MD-algebra then $\Omega_{F_1}, \Omega_{F_2}$ are orbits of maximal dimension, in particular we have 
$$rankB_{F_2} = dim \Omega_{F_1} = dim \Omega_{F_2} = rankB_{F_2}.$$ 

But it is easy to verify that $rankB_{F_2}\ge rankB_{F_1} + 2$. This contradiction proves that $\cal G$ is not an MD-algebra and the proof of Theorem 2.1.1 is therefore complete.\hfill $\square$
\vskip8mm
Now we consider an arbitrary real solvable Lie algebra $\cal{G}$ of dimension $n$ ($n \ge 5$) such that $dim{\cal G}^{1}= n - 1$. It is obvious that we can choose one basis $\left(X_1, X_2,\cdots, X_n\right)$ of $\cal G$ such that ${\cal G}^1 = gen\left(X_2, X_3,\cdots, X_n \right)$, $\,{\cal G}^2 \subset gen\left(X_3,\cdots, X_n\right)$ and ${\cal G}^2$ is commutative. Let $C_{ij}^l \left({1 \le i < j \le n}, 2 \le l \le n\right)$ are constructional constants of $\cal{G}$. Then the Lie brackets are given by the following formulas
$$\left[{{X_i},{X_j}} \right] = 
\sum\limits_{l = 2}^n {C_{ij}^l{X_l}} \left({1 \le i < j \le n} \right).$$

\begin{theorem}
Let ${\cal G}$  be a real solvable Lie algebra of dimension $n$ such that its first derived ideal ${\cal G}^1$ is $(n-1)$-dimensional ($n \ge 5$) and its second derived ideal ${\cal G}^2$ is commutative. Then ${\cal G}$ is MDn-algebra if and only if $\,{\cal G}^1$  is commutative.
\end{theorem}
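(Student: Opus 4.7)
My plan is to prove the two implications separately, with the bulk of the work going into the necessity direction.

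$(\Leftarrow)$: Assume $\mathcal{G}^1$ is commutative. Then $[X_i, X_j] = 0$ for all $i, j \ge 2$, so in the basis $(X_1, \ldots, X_n)$ the matrix of $B_F$ has the block form
$$\begin{pmatrix} 0 & c^T \\ -c & 0 \end{pmatrix}, \qquad c_j = \langle F, [X_1, X_j]\rangle.$$
This has rank 0 or 2, so by Proposition 1.2.3 every K-orbit of $\mathcal{G}$ has dimension 0 or 2, i.e., $\mathcal{G}$ is an MDn-algebra.

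$(\Rightarrow)$: I argue by contrapositive. Assume $\mathcal{G}^2 \ne 0$. First I exhibit a K-orbit of dimension exactly 2 by taking $F_1 := X_2^*$. Since $\mathcal{G}^2 \subseteq gen(X_3, \ldots, X_n)$, $F_1$ vanishes on $\mathcal{G}^2$; and since $[X_i, X_j] \in \mathcal{G}^2$ for $i, j \ge 2$, the form $B_{F_1}$ vanishes on $\mathcal{G}^1 \times \mathcal{G}^1$. At the same time, $X_2 \in \mathcal{G}^1 = [\mathcal{G}, \mathcal{G}]$ cannot come from internal brackets of $\mathcal{G}^1$ (which all land in $\mathcal{G}^2$), so some $[X_1, X_j]$ has a nonzero $X_2$-component; hence $B_{F_1}(X_1, X_j) \ne 0$ and $rank\, B_{F_1} = 2$. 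Because $F_1$ is not perfectly vanishing on $\mathcal{G}^1$, Proposition 1.2.3 forces $\Omega_{F_1}$ to be a maximal-dimensional K-orbit, so that if $\mathcal{G}$ were MDn then the common nonzero orbit dimension would equal 2.

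It remains to produce $F_2 \in \mathcal{G}^*$ with $rank\, B_{F_2} \ge 4$, which will contradict the MD hypothesis. When $\dim \mathcal{G}^2 = n - 2 = \dim \mathcal{G}^1 - 1$, Theorem 2.1.1 applied with $k = 1$ yields this contradiction at once. In the remaining range $1 \le \dim \mathcal{G}^2 \le n - 3$, I would pick nonzero $Y = [U, V] \in \mathcal{G}^2$ with $U, V \in \mathcal{G}^1$, select $X_l \in \mathcal{G}^2$ with $\langle X_l^*, Y \rangle \ne 0$, and take $F_2 := X_2^* + \alpha X_l^*$ for a suitably chosen $\alpha \ne 0$. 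The restriction $B_{F_2}|_{\mathcal{G}^1}$ equals $\alpha B_{X_l^*}|_{\mathcal{G}^1}$, which is nonzero at $(U, V)$ and thus has rank at least 2. The technical heart of the argument, and the main obstacle, is to verify that the first row and column of $B_{F_2}$ coming from $[X_1, \cdot]$ are not absorbed into the image of this new $(n-1)\times(n-1)$ block, so that the total rank is at least $rank\, D + 2 \ge 4$; I expect this to hold for a generic choice of $\alpha$ because $[X_1, \cdot]$ projects $\mathcal{G}^1$ surjectively onto $\mathcal{G}^1 / \mathcal{G}^2$, providing the extra direction the new block cannot capture.
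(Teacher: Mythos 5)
Your $(\Leftarrow)$ direction and your computation that $\Omega_{X_2^*}$ has dimension exactly $2$ (hence that an MD-algebra of this type can only have orbits of dimension $0$ or $2$) are correct and coincide with the paper's argument (its Lemma 2.1.6), and your reduction of the case $\dim\mathcal G^2=n-2$ to Theorem 2.1.1 matches the paper's first case. The gap is exactly where you flag it: in the range $1\le\dim\mathcal G^2\le n-3$ you never actually produce an $F_2$ with $\mathrm{rank}\,B_{F_2}\ge 4$. Writing $B_{F_2}$ in bordered form $\left(\begin{smallmatrix}0&-c^{T}\\ c&D\end{smallmatrix}\right)$ with $D$ the matrix of $B_{F_2}$ restricted to $\mathcal G^1$, one has $\mathrm{rank}\,B_{F_2}=\mathrm{rank}\,D+2$ precisely when $c$ lies outside the column space of $D$, and $\mathrm{rank}\,B_{F_2}=\mathrm{rank}\,D$ otherwise. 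For your $F_2=X_2^*+\alpha X_l^*$ one gets $D=\alpha D'$ and $c=c_0+\alpha c_1$, with $D'$ and $c_1$ coming from $X_l^*$ and $c_0$ from $X_2^*$; if $\mathrm{rank}\,D'=2$ and both $c_0$ and $c_1$ happen to lie in $\mathrm{Im}\,D'$, then $\mathrm{rank}\,B_{F_2}=2$ for \emph{every} $\alpha$, and your one-parameter family yields no contradiction. The heuristic that $\mathrm{ad}_{X_1}$ surjects onto $\mathcal G^1/\mathcal G^2$ controls the components of $c_0$ transverse to $\mathcal G^2$ but says nothing about whether $c_0\in\mathrm{Im}\,D'$, so it does not close this case.

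The paper avoids this obstacle by arguing on structure constants rather than exhibiting a single bad functional: assuming every $B_F$ has rank at most $2$, every principal $4\times4$ skew-symmetric submatrix of $B_F$ has vanishing Pfaffian for \emph{all} $F$, and (together with the invertibility of the induced action of $\mathrm{ad}_{X_1}$ on $\mathcal G^1/\mathcal G^2$, its Lemma 2.1.7) this forces the structure constants $C^l_{2i}, C^l_{3i}$ ($l\ge k+1$) to vanish, so that $[\mathcal G^1,\mathcal G^1]$ is spanned by too few brackets and $\dim\mathcal G^2$ becomes smaller than $n-k$, a contradiction. To repair your proof you would need either to carry out a comparable Pfaffian/structure-constant analysis, or to enlarge the family of test functionals (e.g.\ $X_2^*+\sum_l\alpha_lX_l^*$, possibly after re-choosing the complement of $\mathcal G^2$ in $\mathcal G^1$) and actually prove the required non-containment; as written, the step you yourself call the technical heart is missing.
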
 
In order to prove this theorem, once again, we also need some lemmas.
\vskip8mm
\begin{lemma} If $\,\cal G$ is an  MD-algebra of dimension $n\,$ ($n \ge 5$) such that
$\,\dim{\cal G}^1 = n - 1$ then $\dim {\Omega_F}\in \lbrace 0, 2\rbrace \,$ for every $F \in {\cal G}^*$.
\end{lemma}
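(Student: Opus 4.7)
The plan is to pick a single covector $F \in \mathcal{G}^{*}$ for which one can read off $\dim \Omega_F \le 2$ directly from the structure constants, and then propagate this bound to every non-zero K-orbit using the MD hypothesis together with Proposition 1.2.3. The key structural input is the basis fixed just before Theorem 2.1.2, in which $\mathcal{G}^{1} = \mathrm{gen}(X_2, \ldots, X_n)$ and $\mathcal{G}^{2} \subseteq \mathrm{gen}(X_3, \ldots, X_n)$; the latter inclusion says that commutators of elements of $\mathcal{G}^{1}$ have no $X_2$-component.

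Concretely, I would set $F := X_{2}^{*}$. Since $\langle F, X_2 \rangle = 1$ and $X_2 \in \mathcal{G}^{1}$, this $F$ does not vanish perfectly on $\mathcal{G}^{1}$, so Proposition 1.2.3 ensures that $\Omega_F$ is an orbit of maximal dimension. Next, I would compute $B_F(X_i, X_j) = \langle F, [X_i, X_j] \rangle = C_{ij}^{2}$. Whenever $i, j \ge 2$, the bracket $[X_i, X_j]$ lies in $\mathcal{G}^{2} \subseteq \mathrm{gen}(X_3, \ldots, X_n)$, which forces $C_{ij}^{2} = 0$. Consequently the matrix of $B_F$ in the basis $(X_1, \ldots, X_n)$ is skew-symmetric with all non-zero entries confined to the first row and first column, and so has rank at most $2$.

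Applying Proposition 1.1.3 gives $\dim \Omega_F = \mathrm{rank}\, B_F \le 2$, and since orbit dimensions are always even, the maximal dimension of a K-orbit is either $0$ or $2$. The MD hypothesis then forces $\dim \Omega_F \in \{0, 2\}$ for every $F \in \mathcal{G}^{*}$, which is the claim.

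There is no serious obstacle in this argument: the whole proof is a one-line rank computation once the right $F$ has been chosen. The only subtle point to justify carefully is the use of the stronger inclusion $\mathcal{G}^{2} \subseteq \mathrm{gen}(X_3, \ldots, X_n)$ (rather than merely $\mathcal{G}^{2} \subsetneq \mathcal{G}^{1}$), since this is exactly what kills the bottom-right $(n-1)\times(n-1)$ block of $B_F$; but this is precisely the basis normalization that is always available when $\dim \mathcal{G}^{1} = n-1$, as noted in the paragraph preceding Theorem 2.1.2.
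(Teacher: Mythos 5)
Your proposal is correct and follows essentially the same route as the paper: both choose $F = X_2^{*}$, observe that the matrix of $B_F$ has non-zero entries only in its first row and column because brackets of elements of $\mathcal{G}^{1}$ land in $\mathcal{G}^{2}\subseteq \mathrm{gen}(X_3,\ldots,X_n)$, and conclude via the MD hypothesis. Your invocation of Proposition 1.2.3 to certify that this orbit is of maximal dimension is in fact a touch more careful than the paper, which simply asserts $\mathrm{rank}\,B_{F_0}=2$.
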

\begin{proof}
Let $ad_{X_1} = \left(a_{ij} \right)_{n - 1} \in End\left({\cal G}^1 \right)$. With $F_0 = X_2^*  \in {\cal G}^*$, the matrix of the bilinear form $B_{F_0}$ in the chosen basis as follows

$$B_{F_0} = \left[ \begin{array}{l}
0\,\,\,\,\,\,\, - {a_{12}}\,\,\,\,\,\, - {a_{13}}\,\,\,....\,\, - {a_{1n}}\\
{a_{12}}\,\,\,\,\,\,\,\,\,\,\,0\,\,\,\,\,\,\,\,\,\,\,\,\,\,\,\,0\,\,\,\,\,\,\,\,\,....\,\,\,\,\,\,\,\,0\\
{a_{13}}\,\,\,\,\,\,\,\,\,\,\,0\,\,\,\,\,\,\,\,\,\,\,\,\,\,\,\,0\,\,\,\,\,\,\,\,\,....\,\,\,\,\,\,\,\,0\\
....\,\,\,\,\,\,\,\,\,\,\,....\,\,\,\,\,\,\,\,\,\,\,....\,\,\,\,\,\,\,\,....\,\,\,\,\,\,....\\
{a_{1n}}\,\,\,\,\,\,\,\,\,\,0\,\,\,\,\,\,\,\,\,\,\,\,\,\,\,0\,\,\,\,\,\,\,\,\,\,....\,\,\,\,\,\,\,\,0
\end{array} \right].$$\\
It is plain that  $rank\,B_{F_0} = 2$. Since ${\cal G}$ is an MD-algebra, we get $\dim {\Omega_F}\in \lbrace 0, 2\rbrace \,$ for every $F \in {\cal G}^*$.
\end{proof} 

\begin{lemma}
Suppose that ${\cal G} = gen \left(X_1, X_2,\cdots, X_n \right)$  is a real solvable Lie algebra of dimension n such that\\ 
\centerline{${\cal G}^1 = gen \left(X_2, X_3,\cdots, X_n \right)$ and $\,{\cal G}^2 = gen \left(X_{k + 1},\cdots, X_n \right)$,\, $k > 1$.}
Let   
$$A = \left( {\begin{array}{*{20}{c}}
{C_{12}^2}& \ldots &{C_{1k}^2}\\
 \vdots & \ddots & \vdots \\
{C_{12}^k}& \cdots &{C_{1k}^k}
\end{array}} \right)$$\\
be the matrix established by the constructional constants 
${C_{1j}^l}$ $\left(2\leq j, l \leq k \right)$ of $\cal G$. Then A is invertible.
\end{lemma}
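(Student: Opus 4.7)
The plan is to interpret the matrix $A$ as the coordinate matrix, in a natural basis of the quotient space $\mathcal{G}^1/\mathcal{G}^2$, of the $k-1$ vectors $[X_1,X_2],\ldots,[X_1,X_k]$, and then conclude invertibility by a pure dimension count.

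First I would observe that $\mathcal{G}^2=[\mathcal{G}^1,\mathcal{G}^1]$ is an ideal of the ambient algebra $\mathcal{G}$ (the commutator subalgebra of an ideal is itself an ideal), so $\mathrm{ad}_{X_1}$ preserves $\mathcal{G}^2$; in particular $[X_1,X_j]\in\mathcal{G}^2$ for every $j\ge k+1$. On the other hand, $\mathcal{G}^1$ is linearly generated by the basis brackets $\{[X_i,X_j]:1\le i<j\le n\}$, and whenever $i\ge 2$ both factors lie in $\mathcal{G}^1$, forcing $[X_i,X_j]\in[\mathcal{G}^1,\mathcal{G}^1]=\mathcal{G}^2$. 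Combining these two remarks, the quotient $\mathcal{G}^1/\mathcal{G}^2$ is spanned by the images of the $k-1$ brackets $[X_1,X_2],\ldots,[X_1,X_k]$ alone.

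Reading the bracket relations modulo $\mathcal{G}^2=gen(X_{k+1},\ldots,X_n)$ one has $[X_1,X_j]\equiv\sum_{l=2}^{k}C_{1j}^{l}X_l\pmod{\mathcal{G}^2}$ for every $2\le j\le k$. Thus the $j$-th column of $A$ is precisely the coordinate vector of the image of $[X_1,X_j]$ in the basis $\bar X_2,\ldots,\bar X_k$ of $\mathcal{G}^1/\mathcal{G}^2$. Since $\dim(\mathcal{G}^1/\mathcal{G}^2)=(n-1)-(n-k)=k-1$ and a spanning family of exactly that size must be a basis, the matrix $A$ is invertible.

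No genuine analytical obstacle arises in this argument; once the problem is transferred to the quotient $\mathcal{G}^1/\mathcal{G}^2$, everything reduces to counting dimensions. The only step demanding a line of justification is the stability of $\mathcal{G}^2$ under $\mathrm{ad}_{X_1}$, which is automatic from its being an ideal of $\mathcal{G}$.
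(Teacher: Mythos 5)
Your proof is correct and is essentially the paper's own argument: both rest on the observation that, modulo $\mathcal{G}^2$, every generating bracket of $\mathcal{G}^1$ other than $[X_1,X_2],\ldots,[X_1,X_k]$ vanishes, so these $k-1$ brackets (whose coordinate vectors are the columns of $A$) must span the $(k-1)$-dimensional space $\mathcal{G}^1/\mathcal{G}^2$. The paper phrases this by explicitly solving $AY_j = e_{j-1}$ to produce a right inverse $P$ with $AP=I$, whereas you phrase it as a spanning-plus-dimension count in the quotient; the content is the same.
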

\begin{proof}
Since ${\cal G}^1 = \left[\cal G, \cal G \right]$, there exist real numbers $\alpha_{ij},\,1 \le i < j \le n$,  such that 
\begin{description}
   \item[]\hskip4mm ${X_2} =\sum\limits_{1 \le i < j \le n}{\alpha _{ij}} \left[ {{X_i},{X_j}} \right]$
   \item[]\hskip1cm $= \sum\limits_{j = 2}^k {{\alpha _{1j}}\left[ {{X_1},{X_j}} \right]}+\sum\limits_{j = k + 1}^n {{\alpha _{1j}}\left[ {{X_1},{X_j}} \right]}+\sum\limits_{2 \le i < j \le n} {{\alpha _{{ij}}}\left[ {{X_i},{X_j}} \right]}$
   \item[]\hskip1cm $=\sum\limits_{j = 2}^k {{\alpha _{1j}}\left[ {{X_1},{X_j}} \right]+ LC_1\left({{{\cal G}^2}} \right)}$
   \item[]\hskip1cm $=\sum\limits_{j = 2}^k {{\alpha _{1j}}{\left(\sum\limits_{l = 2}^n {C_{1j}^l{X_l}}\right)}+ LC_1\left({\cal G}^2 \right)}$
   \item[]\hskip1cm $=\sum\limits_{j = 2}^k {{\alpha _{1j}}{\left(\sum\limits_{l = 2}^k {C_{1j}^l{X_l}}+\sum\limits_{l = k+1}^n {C_{1j}^l{X_l}}\right)}+ LC_1\left({\cal G}^2 \right)}$
   \item[]\hskip1cm $=\sum\limits_{l = 2}^k {\sum\limits_{j = 2}^k {C_{1j}^l{\alpha_{1j}}X_l}+ LC_2\left({\cal G}^2 \right)},$
\end{description}
where $LC_1\left({\cal G}^2 \right),\,LC_2\left({\cal G}^2 \right)$ are linear combinations of some definite vectors from the chosen basis of  ${\cal G}^2.$
This implies that there exists columnar vector ${Y_2} \in {{\mathbb {R}}^{k - 1}}$  such that $$AY_2 = \left[ \begin{array}{l}
1\\
0\\
 \vdots \\
0
\end{array} \right] \in {\mathbb {R}}^{k - 1}.$$
Similarly, there exist columnar vectors $Y_3,\cdots,{Y_k} \in {{\mathbb {R}}^{k - 1}}$ such that $$AY_3 = \left[ \begin{array}{l}
0\\
1\\
0\\
 \vdots \\
0
\end{array} \right],\, \cdots, \, AY_k = \left[ \begin{array}{l}
0\\
0\\
 \vdots \\
0\\
1
\end{array} \right] \in {\mathbb {R}}^{k - 1}.$$\\
Thus, there is a real matrix $P$ such that $A.P = I$, where $I$ is the identity $(k-1)$-matrix. So $A$ is invertible and Lemma 2.1.7 is proved completely.
\end{proof}

\subsection*{Proof of Theorem 2.1.5}
Firstly, we shall prove that, if ${\cal G} = gen \left(X_1, X_2,\cdots, X_n \right)\,\left(n \ge 5 \right)$ such that ${\cal G}^1= gen \left(X_2, X_3,\cdots,X_n \right)$  is non-commutative, then ${\cal G}$ is not an MD-algebra. Let ${{\cal G}^2} = gen \left(X_{k + 1},\cdots, X_n \right),\, 2 \le k < n $.

We need consider some cases which contradict each other as follows.
\begin{itemize}
\item[1.]
    $k = 2$. Then, $\dim{\cal G}^2 = \dim{\cal G}^1 - 1$. According to Theorem 2.1.1, {\cal G} is not an MD-algebra.
\item[2.]
    $k = 3$. That means that ${\cal G}^2 = gen \left(X_4,\cdots, X_n \right)$. 
Assume that $\cal G$ is an MD-algebra. Remember that 
$$\left[X_1, X_2 \right] = \sum\limits_{l = 2}^n {C_{12}^l{X_l}},\, \left[X_1, X_3 \right] = \sum\limits_{l = 2}^n {C_{13}^l{X_l}},\, \left[X_i, X_j \right] = \sum\limits_{l = 4}^n {C_{ij}^l{X_l}}\begin{scriptsize}
{\footnotesize •}
\end{scriptsize},$$
for all $j > 4$ when $i = 1$, $j \ge 3$ when $i = 2$ and $j > 3$ when $i = 3$.
According  to Lemma 2.1.7, the matrix $P = \left[ {\begin{array}{*{20}{c}}
{C_{12}^2}&{C_{12}^3}\\{C_{13}^2}&{C_{13}^3}\end{array}} \right]$
is invertible.\\
Let $F = {\alpha_1}X_1^* + {\alpha_2}X_2^* + \cdots  + {\alpha_n}X_n^*$  be an arbitrary element of ${\cal G}^*$, where $\alpha_1, \alpha_2,\cdots, \alpha_n \in \mathbb{R}$. The matrix of the bilinear form  ${B_F}$  is $${B_F} = \left[ {\begin{array}{*{20}{c}}
0&{ - F\left( {\left[ {{X_1},{X_2}} \right]} \right)}& \cdots &{ - F\left( {\left[ {{X_1},{X_n}} \right]} \right)}\\
{F\left( {\left[ {{X_1},{X_2}} \right]} \right)}&0& \cdots &{ - F\left( {\left[ {{X_2},{X_n}} \right]} \right)}\\
 \cdots & \cdots & \cdots & \cdots \\
{F\left( {\left[ {{X_1},{X_n}} \right]} \right)}&{F\left( {\left[ {{X_2},{X_n}} \right]} \right)}& \cdots &0
\end{array}} \right].$$

Now we consider the $4$-submatrices of ${B_F}$ established by the elements which are on the rows and the columns of the same numbers 1, 2, 3, i ($i>3$). Because $\cal G$ is an MD-algebra, so according to Lemma 2.1.6, we get $rank(B_F) \in \lbrace 0, 2\rbrace $, this implies that the determinants of these $4$-submatrices are zero for any $F \in {{\cal G}^*}$. By direct computations, using the following obvious result of Linear Algebra: {\sl the determinant of any skew-symmetric real $4$-matrix $\left( a_{ij} \right)_4$ is equal to zero if and only if} ${a_{12}}.{a_{34}} - {a_{13}}.{a_{24}} + {a_{14}}.{a_{23}} = 0$, we get $C_{2i}^l = C_{3i}^l = 0,l \ge 4$.
This implies  $\left[X_2, X_i \right] = \left[X_3, X_i \right] = 0, i \ge 4$. Note that ${\cal G}^2$ is commutative. So we have
\begin{description}
   \item[] \hskip2cm ${\cal G}^2 = \left[{\cal G}^1,{\cal G}^1 \right] = 
           gen \left(X_4,\cdots, X_n \right)$ 
   \item[] \hskip2.5cm $= gen \left(\left[X_i, X_j \right];i,j \ge 2 \right)$
   \item[] \hskip2.5cm $= gen \left(\left[X_2, X_3 \right]\right)$. 
\end{description} 
Thus, $n - 3 = \dim {\cal G}^2 \le 1$, i.e. $n \le 4$. This contradicts the hypothesis $n \ge 5$. That means $\cal G$ is not an MD-algebra.
\item[3.]
     $k \ge 4$. By an argument analogous to that used above, we also prove that $\cal G$ is not an MD-algebra.
\end{itemize}

Conversely, assume that ${\cal G}$ is a real solvable Lie algebra of dimension $n$ such that its first derived ideal is $(n-1)$-dimensional and commutative, i.e. ${{\cal G}^1} \equiv {\mathbb {R}}.{X_2} \oplus {\mathbb {R}}.{X_3} \oplus ... \oplus {\mathbb {R}}.{X_n} \equiv {{\mathbb {R}}^{n - 1}}$. We need show that $\cal G$ is an MD-algebra.

Let $F = {\alpha_1}X_1^* + {\alpha_2}X_2^* + \cdots + {\alpha_n}X_n^* \equiv \left( \alpha_1, \alpha_2,\cdots,\alpha_n \right) \in {\mathbb{R}}^n$  be an arbitrary element from ${\cal G}^* \equiv {\mathbb {R}}^n$, where $\alpha_1, \alpha_2,\cdots,\alpha_n  \in {\mathbb {R}}$. By simple computation, we can see that the matrix  of the bilinear form ${B_F}$ is
 $${B_F} = \left[ {\begin{array}{*{20}{c}}
0&{ - F\left( {\left[ {{X_1},{X_2}} \right]} \right)}& \cdots &{ - F\left( {\left[ {{X_1},{X_n}} \right]} \right)}\\
{F\left( {\left[ {{X_1},{X_2}} \right]} \right)}&0& \cdots &0\\
 \cdots & \cdots & \cdots & \cdots \\
{F\left( {\left[ {{X_1},{X_n}} \right]} \right)}&0& \cdots &0
\end{array}} \right].$$
It is clear that $rankB_F \in \left\{ {0,2} \right\}$. Hence, ${\cal G}$  is an MDn-algebra and Theorem 2.1.5 is proved completely. \hfill{$\square$}

\subsection{Classification of MD5-algebras having\\
non-commutative derived ideals}
The following theorem is the main result of the paper. It gives the classification up to an isomorphism of MD5-algebras having non-commutative derived ideals.
\begin{theorem}
Let ${\cal G}$  be an MD5-algebra such that the first derived ideal ${\cal G}^1 = \left[{\cal G},{\cal G}\right]$ is non-commutative. Then the following assertions hold.
\begin{itemize}
\item[(i)]If ${\cal G}$ is decomposable, then ${\cal G} \cong {\cal H} \oplus {\cal K}$, where ${\cal H}$ and ${\cal K}$ are MD-algebras of dimensions which are no more than 4.
\item[(ii)]If ${\cal G}$  is indecomposable, then we can choose a suitable basis 
$(X_1, X_2, X_3,$\\ $X_4, X_5)$ of \,${\cal G}$ such that \,${\cal G}^1 = gen \left(X_3, X_4, X_5 \right),\,\left[X_3, X_4 \right] = X_5$; operators $ad_{X_1}, ad_{X_2}$ act on ${\cal G}^1$ as the following endomorphisms
$$ad_{X_1} = \left( {\begin{array}{*{20}{c}}
1&0&0\\
0&1&0\\
0&0&2
\end{array}}\right),\,\, ad_{X_2} = \left( {\begin{array}{*{20}{c}}
0&-1&0\\
1&0&0\\
0&0&0
\end{array}} \right)$$ 
and the other Lie brackets are trivial.
\end{itemize}
\end{theorem}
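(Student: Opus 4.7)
The plan is to pin down the dimensions of the derived series using the preceding structural theorems, normalize $\mathcal{G}^1$ as a Lie algebra, and then use the MD condition to classify the adjoint action of the complement. For the dimensions, Proposition 1.2.2 gives that $\mathcal{G}^2$ is commutative; since $\mathcal{G}$ is solvable and $\mathcal{G}^1$ is non-commutative, $0 \subsetneq \mathcal{G}^2 \subsetneq \mathcal{G}^1 \subsetneq \mathcal{G}$. Theorem 2.1.5 excludes $\dim \mathcal{G}^1 = 4$, and Theorem 2.1.1 excludes $\dim \mathcal{G}^2 = \dim \mathcal{G}^1 - 1$, leaving only $(\dim \mathcal{G}^1, \dim \mathcal{G}^2) = (3,1)$. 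I then pick a basis $X_1, \ldots, X_5$ with $\mathcal{G}^1 = \mathrm{gen}(X_3, X_4, X_5)$ and $\mathcal{G}^2 = \mathbb{R}X_5$; since $\mathcal{G}^2$ is characteristic, each $\mathrm{ad}_{X_i}$ preserves $\mathbb{R}X_5$, so $\mathcal{G}^1$ is a 3-dim solvable Lie algebra with 1-dim derived ideal, hence isomorphic to either Heisenberg $\mathfrak{h}_3$ or $\mathfrak{aff}(\mathbb{R}) \oplus \mathbb{R}$.

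Part (i) requires almost nothing: any Lie-algebra decomposition $\mathcal{G} = \mathcal{H} \oplus \mathcal{K}$ with nonzero factors has $\dim \mathcal{H}, \dim \mathcal{K} \leq 4$, and because the K-orbits of a direct sum are products of the K-orbits of the summands, the MD property descends to each summand. For part (ii), I assume $\mathcal{G}$ is indecomposable and first show $\mathcal{G}^1 \cong \mathfrak{h}_3$: in the alternative $\mathcal{G}^1 \cong \mathfrak{aff}(\mathbb{R}) \oplus \mathbb{R}$, with the $\mathbb{R}$-summand spanned by a $\mathcal{G}^1$-central vector $X_4$, the Jacobi identities applied to $[X_i, X_4]$ together with the MD restriction on $B_F$ at $F = X_4^*$ should force $[X_1, X_4] = [X_2, X_4] = 0$, exhibiting an $\mathbb{R}X_4$-summand of $\mathcal{G}$ and contradicting indecomposability. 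A rescaling inside $\mathcal{G}^1$ then brings the Heisenberg brackets to $[X_3, X_4] = X_5$, $[X_3, X_5] = [X_4, X_5] = 0$.

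It remains to normalize $D_i := \mathrm{ad}_{X_i}|_{\mathcal{G}^1}$ for $i = 1, 2$. Each $D_i$ is a derivation of $\mathfrak{h}_3$, so in the chosen basis
\[
D_i = \begin{pmatrix} a_i & c_i & 0 \\ b_i & d_i & 0 \\ e_i & f_i & a_i + d_i \end{pmatrix}.
\]
Three residual freedoms are available: substitutions $X_i \mapsto X_i + v_i$ with $v_i \in \mathcal{G}^1$ (killing the inner-derivation parameters $e_i, f_i$ and absorbing the $X_5$-component of $[X_1, X_2]$); the $GL(2)$-action on $\mathrm{gen}(X_1, X_2)$; and the automorphism group of $\mathfrak{h}_3$ acting simultaneously on $D_1$ and $D_2$. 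Imposing the MD condition for a generic $F \in \mathcal{G}^*$ and using the $4$-matrix Pfaffian identity recalled in the proof of Theorem 2.1.5 turns $\mathrm{rank}\,B_F \leq r$ (for the fixed even $r$ common to all nonzero K-orbits) into a system of polynomial equations in $(a_i, b_i, c_i, d_i)$; combining these with the residual freedoms should collapse $(D_1, D_2)$ to the stated normal form with $[X_1, X_2] = 0$. The main obstacle is exactly this last step: extracting enough independent polynomial identities from $B_F$ to pin down the six essential Jordan parameters of $(D_1, D_2)$, and matching the surviving possibilities with the automorphism freedom of $\mathfrak{h}_3$ so as to exhibit the stated pair as the unique MD-compatible indecomposable representative.
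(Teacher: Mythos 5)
There is a genuine gap: the decisive step of your argument is announced but not carried out. After setting up $\mathcal{G}^1\cong\mathfrak{h}_3$ and listing the residual basis freedoms, you write that imposing the MD condition ``should collapse'' the pair $(D_1,D_2)$ to the stated normal form. That collapse is essentially the whole content of the theorem, and it is where all the work lies. The paper executes it as follows: after the Jacobi identities for the triads $X_1,X_2,X_i$ produce the relations $a_3=a_4=0$, $e_3+f_4=0$, $b_3+c_4=d_5$, etc., one splits into three mutually exclusive cases on $(e_3,e_4)$ (both zero; $e_4=0\ne e_3$; $e_4\ne 0$), and in each case evaluates $\mathrm{rank}\,B_F$ at explicitly chosen functionals ($X_3^*$, $X_4^*$, $X_5^*$, or a generic $F$) to force either a contradiction with the MD property / indecomposability, or the surviving family $[X_1,X_3]=\mu X_3$, $[X_1,X_4]=\mu X_4$, $[X_2,X_3]=X_4$, $[X_2,X_4]=\theta X_3$, from which the MD condition forces $\theta<0$ and a final rescaling gives the normal form. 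Until you produce those polynomial identities and run the case analysis, you have a plan, not a proof; in particular the uniqueness claim (that exactly one indecomposable algebra survives) is nowhere established in your text.

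Two secondary points. First, your route to $\mathcal{G}^1\cong\mathfrak{h}_3$ is both longer than necessary and itself gapped: the alternative $\mathcal{G}^1\cong\mathfrak{aff}(\mathbb{R})\oplus\mathbb{R}$ is killed in one line by Lemma 2.2.2, since $\mathfrak{aff}(\mathbb{R})$ contains an element whose adjoint action on $\mathcal{G}^1$ has nonzero trace, whereas every $Z\in\mathcal{G}^1$ must have $\mathrm{Trace}(ad_Z)=0$. Your proposed elimination via indecomposability is not complete as stated: even if you force $[X_1,X_4]=[X_2,X_4]=0$ for the central vector $X_4$, the complement $\mathrm{gen}(X_1,X_2,X_3,X_5)$ need not be a subalgebra, because brackets such as $[X_1,X_2]$ or $[X_1,X_3]$ may still carry $X_4$-components; so no direct-sum decomposition is exhibited. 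Second, be careful with ``$\mathrm{rank}\,B_F\le r$ for the fixed even $r$'': the MD condition is $\mathrm{rank}\,B_F\in\{0,r_{\max}\}$, and in the surviving case the maximal rank is $4$, so the useful constraints come from Proposition 1.2.3 (equality of ranks at any two functionals not vanishing on $\mathcal{G}^1$), not from a uniform upper bound.
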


We need to prove some lemmas before we prove Theorem 2.2.1.
\begin{lemma}
Let ${\cal G}$ be a real solvable Lie algebra. For any $Z \in {\cal G}$ we consider $ad_Z$ as an operator acted on ${\cal G}^1$. Then we have $Trace \left(ad_Z \right) = 0$ for all $Z \in {\cal G}^1$.
\end{lemma}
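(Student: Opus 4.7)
The plan is to reduce the claim to the elementary fact that the trace of a commutator of linear operators on a finite-dimensional space is zero. First I would observe that $\mathcal{G}^{1}$ is an ideal of $\mathcal{G}$, so for every $W \in \mathcal{G}$ the operator $ad_{W}$ maps $\mathcal{G}^{1}$ into itself and therefore restricts to a well-defined endomorphism $ad_{W}\!\mid_{\mathcal{G}^{1}} \in \mathrm{End}(\mathcal{G}^{1})$. In particular, for $Z \in \mathcal{G}^{1}$ the restriction $ad_{Z}\!\mid_{\mathcal{G}^{1}}$ is an endomorphism of the finite-dimensional vector space $\mathcal{G}^{1}$, so its trace is defined.

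Next I would use the defining property of $\mathcal{G}^{1}=[\mathcal{G},\mathcal{G}]$: every $Z \in \mathcal{G}^{1}$ can be written as a finite linear combination
$$Z = \sum_{i} c_{i}\,[X_{i}, Y_{i}], \qquad c_{i}\in\mathbb{R},\; X_{i}, Y_{i}\in\mathcal{G}.$$
The Jacobi identity, read in the form $ad_{[X,Y]} = ad_{X}\,ad_{Y} - ad_{Y}\,ad_{X}$, then gives, after restriction to the invariant subspace $\mathcal{G}^{1}$,
$$ad_{Z}\!\mid_{\mathcal{G}^{1}} \;=\; \sum_{i} c_{i}\,\bigl[\, ad_{X_{i}}\!\mid_{\mathcal{G}^{1}},\; ad_{Y_{i}}\!\mid_{\mathcal{G}^{1}} \,\bigr],$$
an $\mathbb{R}$-linear combination of commutators of endomorphisms of $\mathcal{G}^{1}$.

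Finally I would invoke the basic linear-algebra fact that $\mathrm{Trace}(AB - BA) = 0$ for any $A, B \in \mathrm{End}(\mathcal{G}^{1})$, together with linearity of the trace, to conclude $\mathrm{Trace}(ad_{Z}\!\mid_{\mathcal{G}^{1}}) = 0$. There is essentially no obstacle here: the only content is recognising that $\mathcal{G}^{1}$ is $ad_{W}$-invariant so that the restriction makes sense, and then applying $\mathrm{tr}[A,B]=0$. Note that the solvability hypothesis is not actually used in this argument; it is inherited from the ambient setting of the paper but plays no role in the proof.
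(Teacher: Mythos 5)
Your proposal is correct and follows essentially the same route as the paper's proof: both use the Jacobi identity to get $ad_{[X,Y]} = ad_X \circ ad_Y - ad_Y \circ ad_X$ on $\mathcal{G}^1$, note that the trace of a commutator vanishes, and extend by linearity over the spanning brackets of $\mathcal{G}^1 = [\mathcal{G},\mathcal{G}]$. Your explicit remarks that $\mathcal{G}^1$ is $ad$-invariant (so the restriction is well defined) and that solvability is not needed are accurate but do not change the argument.
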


\begin{proof}
Using the Jacobi identity for $X,Y \in {\cal G}$ and an arbitrary element $Z \in {{\cal G}^1}$, we have $\left[ {X,\left[ {Y,Z} \right]} \right] + \left[ {Y,\left[ {Z,X} \right]} \right] + \left[ {Z,\left[ {X,Y} \right]} \right] = 0$. So, $a{d_X} \circ a{d_Y} - a{d_Y} \circ a{d_X} = a{d_{\left[ {X,Y} \right]}}$. This implies $Trace\left( {a{d_{\left[ {X,Y} \right]}}} \right) = 0$. Note that ${{\cal G}^1} = \left[ {{\cal G},{\cal G}} \right]$ and $a{d_Z}$ is a linear map. So we get $Trace\left( {a{d_Z}} \right) = 0$ for all $Z \in {{\cal G}^1}$.
\end{proof}  
\begin{lemma}
If \,${\cal G}$ is a real solvable Lie algebra with $\dim{\cal G}^1 = 2$ then \,${{\cal G}^1}$ is commutative.
\end{lemma}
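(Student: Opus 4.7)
The plan is to argue by contradiction using the classification of 2-dimensional Lie algebras together with Lemma 2.2.2 applied to elements of $\mathcal{G}^1$ itself.

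Suppose, to the contrary, that $\mathcal{G}^1$ is not commutative. Since $\dim \mathcal{G}^1 = 2$ and the only non-abelian 2-dimensional Lie algebra (up to isomorphism) is the affine algebra with relation $[a,b]=a$, I would first choose a basis $(Y_1, Y_2)$ of $\mathcal{G}^1$ such that $[Y_1, Y_2] = Y_1$. This reduction is standard: pick any two non-commuting elements $U, V$ of $\mathcal{G}^1$, let $Y_1 := [U, V] \neq 0$ and rescale $V$ so that the action of $V$ on $Y_1$ gives $[Y_1, V] = Y_1$, then set $Y_2 := V$.

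Next I would observe that since $\mathcal{G}^1$ is an ideal of $\mathcal{G}$, for any $Z \in \mathcal{G}$ the operator $\mathrm{ad}_Z$ restricts to an endomorphism of $\mathcal{G}^1$; in particular this is true for $Z = Y_2 \in \mathcal{G}^1$. In the basis $(Y_1, Y_2)$, the restriction $\mathrm{ad}_{Y_2}|_{\mathcal{G}^1}$ has matrix
$$
\begin{pmatrix} -1 & 0 \\ 0 & 0 \end{pmatrix},
$$
because $[Y_2, Y_1] = -Y_1$ and $[Y_2, Y_2] = 0$. Hence $\mathrm{Trace}(\mathrm{ad}_{Y_2}|_{\mathcal{G}^1}) = -1$.

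On the other hand, $Y_2 \in \mathcal{G}^1$, so Lemma 2.2.2 forces $\mathrm{Trace}(\mathrm{ad}_{Y_2}|_{\mathcal{G}^1}) = 0$. This contradiction shows that $\mathcal{G}^1$ must be commutative. I do not expect any real obstacle: the whole argument hinges on the classification of 2-dimensional Lie algebras (a one-line invocation) together with the trace vanishing statement already established in Lemma 2.2.2; the only care required is to note that $Y_2$ itself lies in $\mathcal{G}^1$ (being a basis vector of it), which is precisely what makes Lemma 2.2.2 applicable and produces the contradiction.
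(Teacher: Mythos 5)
Your proof is correct and rests on exactly the same mechanism as the paper's: apply Lemma 2.2.2 to a basis element of $\mathcal{G}^1$ and note that the trace of its adjoint action on $\mathcal{G}^1$ would be nonzero if $\mathcal{G}^1$ were non-abelian. The paper argues directly rather than by contradiction --- it takes an arbitrary basis $(X,Y)$ with $[X,Y]=aX+bY$, computes $\mathrm{Trace}(ad_X)=b$ and $\mathrm{Trace}(ad_Y)=-a$, and concludes $a=b=0$ --- which avoids your (harmless but unnecessary) appeal to the classification of 2-dimensional Lie algebras.
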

\begin{proof}
We choose a basis $(X, Y)$ of ${{\cal G}^1}$. Assume that $\left[X, Y \right] = aX + bY$. So we have $ad_X = \left( {\begin{array}{*{20}{c}}
0&a\\
0&b
\end{array}} \right), ad_Y = \left( {\begin{array}{*{20}{c}}
{ - a}&0\\
{ - b}&0
\end{array}} \right)\in End({\cal G}^1)$. According to Lemma 2.2.2, we get $a = b = 0$. Hence, ${{\cal G}^1}$ is commutative.
\end{proof}
Now we are ready to prove Theorem 2.2.1 - The main result of the paper.
\vskip1cm
\begin{prove} \end{prove} 

It is clear that assertion (i) of Theorem 2.2.1 holds obviously. We only
need to prove assertion (ii). Let ${\cal G}$ be an indecomposable MD5-algebra with the first derived ideal ${\cal G}^1 = \left[{\cal G}, {\cal G} \right]$ is non - commutative and the second derived ideal ${\cal G}^2 = \left[{\cal G}^1, {\cal G}^1\right]$ is commutative. According to Theorems 2.1.1, 2.1.5 and Lemma 2.2.3, the dimensions of ${\cal G}^1$ and ${\cal G}^2 = \left[{\cal G}^1, {\cal G}^1\right]$ must be 3 and 1, respectively.
We choose a basis $\left(X_1, X_2, X_3, X_4, X_5 \right)$ such that ${\cal G}^1 = gen \left(X_3, X_4, X_5 \right)$ and ${\cal G}^2 = gen \left(X_5 \right)$ with the Lie brackets are given by \[\begin{array}{l}
\left[X_1, X_2 \right] = {a_3}{X_3} + {a_4}{X_4} + {a_5}{X_5},\\ 
\left[X_1, X_3 \right] = {b_3}{X_3} + {b_4}{X_4} + {b_5}{X_5}, \\ 
\left[X_1, X_4 \right] = {c_3}{X_3} + {c_4}{X_4} + {c_5}{X_5}, \\
\left[X_1, X_5 \right] = {d_3}{X_3} + {d_4}{X_4} + {d_5}{X_5}, \\ 
\left[X_2, X_3 \right] = {e_3}{X_3} + {e_4}{X_4} + {e_5}{X_5}, \\ 
\left[X_2, X_4 \right] = {f_3}{X_3} + {f_4}{X_4} + {f_5}{X_5}, \\
\left[X_2, X_5 \right] = {k_3}{X_3} + {k_4}{X_4} + {k_5}{X_5}, \\
\left[X_3, X_4 \right] = {g_5}{X_5}, \left[X_3, X_5 \right] = {h_5}{X_5}, 
\left[X_4, X_5 \right] = {l_5}{X_5},
\end{array}\]
where $a_i, b_i, c_i, d_i, e_i, f_i, k_i \,(i = 3, 4, 5)$ and $g_5, h_5, l_5$ are the definite real numbers.


Now we give some useful remarks as follows.
\begin{itemize}
\item[{\bf a.}] According to Lemma 2.2.2, $Trace\left(ad_{X_3}\right) = Trace\left( ad_{X_4}\right) = 0$. That means ${h_5} = {l_5} = 0$. 
\item[{\bf b.}] $g_5 \ne 0$ because ${\cal G}^2 = gen (X_5)$. By changing ${X_3}$  with ${X'_3} = {\frac{1}{g_5}}X_3$, we get $\left[X'_3, X_4 \right] = X_5$. So, we can  suppose right from the start that $\left[X_3, X_4 \right] = X_5$, i.e. $g_5 = 1$.
\item[{\bf c.}] ${d_3} = {d_4} = {k_3} = {k_4} = 0$ because ${\cal G}^2 = \mathbb{R}.X_5$ is an ideal of ${\cal G}$. So, we get
$$\left[X_1, X_5 \right] = {d_5}{X_5},\,\left[X_2, X_5 \right] = {k_5}{X_5}.$$
If $k_5 \ne 0$, by changing $X'_2 = X_1 - {\frac{d_5}{k_5}}X_2$ we get
$\left[X'_2, X_5 \right] = 0$. So, we can always assume that $k_5 = 0$. 
\item[{\bf d.}] By changing ${X_1}$ with  ${X'_1} = {X_1} - {c_5}{X_3} + {b_5}{X_4}$  and ${X_2}$  with  ${X'_2} = {X_2} - {f_5}{X_3} + {e_5}{X_4}$, we get\\
\centerline {$\left[X'_1, X_3 \right] = {b_3}{X_3} + {b_4}{X_4}$,  
$\left[X'_1, X_4 \right] = {c_3}{X_3} + {c_4}{X_4}$,} 
\centerline {$\left[X'_2, X_3 \right] = {e_3}{X_3} + {e_4}{X_4}$, 
$\left[X'_2, X_4 \right] = {f_3}{X_3} + {f_4}{X_4}$.} 

Thus, we can  suppose right from the start that ${b_5} = {c_5} = {e_5} = {f_5} = 0$.
 
\end{itemize}
Using the Jacobi identity for triads $X_1, X_2, X_i \,(i = 3, 4, 5)$, we obtain
\[(I)\left\{ \begin{array}{l}
{a_3} = {a_4} = 0,\\
{e_4}{c_3} = {b_4}{f_3},\\
{e_3}{b_4} + {e_4}{c_4} = {b_3}{e_4} + {b_4}{f_4},\\
{f_3}{b_3} + {f_4}{c_3} = {c_3}{e_3} + {c_4}{f_3},\\
{b_3} + {c_4} = {d_5},\\
{e_3} + {f_4} = 0.
\end{array} \right.\]
So we can reduce the Lie brackets as follows
 \[\begin{array}{l}
\left[ {{X_1};{X_2}} \right] = \,\,\,\,\,\,\,\,\,\,\,\,\,\,\,\,\,\,\,\,\,\,\,\,\,\,\,\,\,\,\,\,\,{a_5}{X_5},\\
\left[ {{X_1};{X_3}} \right] = {b_3}{X_3} + {b_4}{X_4},\\
\left[ {{X_1};{X_4}} \right] = {c_3}{X_3} + {c_4}{X_4},\\
\left[ {{X_1};{X_5}} \right] = \,\,\,\,\,\,\,\,\,\,\,\,\,\,\,\,\,\,\,\,\,\,\,\,\,\,\,\left( {{b_3} + {c_4}} \right){X_5},\\
\left[ {{X_2};{X_3}} \right] = {e_3}{X_3} + \,{e_4}{X_4},\\
\left[ {{X_2};{X_4}} \right] = {f_3}{X_3} - {e_3}{X_4},\\
\left[ {{X_3};{X_4}} \right] = \,\,\,\,\,\,\,\,\,\,\,\,\,\,\,\,\,\,\,\,\,\,\,\,\,\,\,\,\,\,\,\,\,{X_5}.
\end{array}\]
Thus, Relations $(I)$ can be rewritten as follows
\[(II)\left\{ \begin{array}{l}
{e_4}{c_3} = {b_4}{f_3},\\
2{e_3}{b_4} = {e_4}\left( {{b_3} - {c_4}} \right),\\
2{c_3}{e_3} = {f_3}\left( {{b_3} - {c_4}} \right).
\end{array} \right.\]

Now we need consider the following cases which contradict each other.
\begin{itemize}
   \item[{\bf Case 1:}] ${e_3} = {e_4} = 0.$

\[(II) \Leftrightarrow \left\{\begin{array}{l}
{b_4}{f_3} = 0,\\
{f_3}\left( {{b_3} - {c_4}} \right) = 0.
\end{array} \right.\]
   \item[{\bf 1.1.}] Assume that ${f_3} = 0$. Then, Relations $(II)$ is automatically satisfied.\\
   \item[] By choosing $F_1 = X_3^*\in {\cal G}^*$, we get $rank{B_{{F_1}}} = 2$. 
   \item[] Now we choose $F_2 = X_5^* \in {\cal G}^*$. By simple computation, we obtain
\[{B_{{F_2}}} = \left[ {\begin{array}{*{20}{c}}
0&{ - {a_5}}&0&0\\
{{a_5}}&0&0&0\\
0&0&0&{ - 1}\\
{\begin{array}{*{20}{c}}
0\\
{{b_3} + {c_4}}
\end{array}}&{\begin{array}{*{20}{c}}
0\\
0
\end{array}}&{\begin{array}{*{20}{c}}
1\\
0
\end{array}}&{\begin{array}{*{20}{c}}
0\\
0
\end{array}}
\end{array}\,\,\,\,\,\,\,\begin{array}{*{20}{c}}
{ - \left( {{b_3} + {c_4}} \right)}\\
0\\
0\\
{\begin{array}{*{20}{c}}
0\\
0
\end{array}}
\end{array}} \right]\]
Since ${\cal G}$ is an MD-algebra, this implies that $rank{B_{{F_2}}} = rank{B_{{F_1}}} = 2$. That fact implies that ${a_5} = {b_3} + {c_4} = 0$, so ${\cal G}$ is decomposable, which is a contradiction. Thus, this case cannot happen.\\

   \item[{\bf 1.2.}] Now we assume that ${f_3} \ne 0$. Then $b_4 = 0, b_3 = c_4$. \\
By changing $X_1$ and $X_2$ with ${X'_1} = {X_1} - {c_3}{X'_2}$ and 
$X'_2 = {\frac{1}{f_3}X_2}$ we can suppose right from the start that $f_3 = 1, c_3 = 0$. Because of the dimension of ${\cal G}^1$ is 3, $b_3 \ne 0$. By changing $X_1$  with $X'_1 = {\frac{1}{b_3}}{X_1}$, we can always assume that $b_3 = 1$. By changing  $X_2$ with $X'_2 = X_2 - \frac{a_5}{2}{X_5}$, we can assume that $a_5 = 0$. Now, we choose $F_3 = X_3^*$ and $F_4 = X_4^*$ from ${\cal G}^*$, we get $rank{B_{{F_3}}} = 2,\,rank{B_{{F_4}}} = 4$. This cannot happen because ${\cal G}$ is an MD-algebra.

\item[{\bf Case 2:}]  ${e_4} = 0,{e_3} \ne 0$
\[\left( {II} \right) \Leftrightarrow \left\{ \begin{array}{l}
{b_4}{f_3} = 0,\\
{b_4}{e_3} = 0,\\
2{c_3}{e_3} = {f_3}\left( {{b_3} - {c_4}} \right).
\end{array} \right. \Leftrightarrow \left\{ \begin{array}{l}
{b_4} = 0,\\
2{c_3}{e_3} = {f_3}\left( {{b_3} - {c_4}} \right).
\end{array} \right.\]
By changing $X_2$  with $X'_2 = {\frac{1}{e_3}}{X_2}$, we can assume that $e_3 = 1$. Now Relations $(II)$ can be rewritten as follows
 \[(III)\left\{ \begin{array}{l}
{b_4} = 0,\\
2{c_3} = {f_3}\left( {{b_3} - {c_4}} \right).
\end{array} \right.\]
By changing $X_4$ with $X'_4 = X_4 - \frac{f_3}{2}{X_3}$  and  
$X_1$ with $X'_1 = X_1 - {b_3}{X_2}$, we can suppose that  $b_3 = f_3 = 0$.
From Relations (III) we get  $b_4 = c_3 = 0$.
Let $F = \alpha \,X_1^* + \beta X_2^* + \gamma X_3^* + \delta X_4^* + \sigma X_5^* \equiv \left( {\alpha ,\beta ,\gamma ,\delta ,\sigma } \right)$  be an arbitrary element from ${{\cal G}^*} \equiv {\mathbb {R}^5}$; $\alpha ,\beta ,\gamma ,\delta,\sigma  \in \mathbb {R}$. By simple computation, we obtain the matrix of the bilinear form ${B_F}$ as follows
 \[{B_F} = \left[ {\begin{array}{*{20}{c}}
0&{ - {a_5}\sigma }&0&{ - {c_4}\delta }\\
{{a_5}\sigma }&0&{ - \gamma }&\delta \\
0&\gamma &0&{ - \sigma }\\
{\begin{array}{*{20}{c}}
{{c_4}\delta }\\
{{c_4}\sigma }
\end{array}}&{\begin{array}{*{20}{c}}
\delta \\
0
\end{array}}&{\begin{array}{*{20}{c}}
\sigma \\
0
\end{array}}&{\begin{array}{*{20}{c}}
0\\
0
\end{array}}
\end{array}\,\,\,\,\,\,\,\begin{array}{*{20}{c}}
{ - {c_4}\sigma }\\
0\\
0\\
{\begin{array}{*{20}{c}}
0\\
0
\end{array}}
\end{array}} \right]\]
Since ${\cal G}$ is an MD-algebra, $rank{B_F}$  only get two values zero or two. Hence, it is easy to prove that  $c_4 = a_5 = 0$. But this implies that ${\cal G}$ is decomposable. This is a contradiction. Thus, this case cannot happen.

\item[{\bf Case 3:}]${e_4} \ne 0$. 

By changing $X_4$  with $X'_4 = {e_4}{X_4} + {e_3}{X_3}$, we can assume that 
$e_3 = 0, e_4 = 1$. Now, Relations $(II$) can be rewritten as follows
$$\left\{ \begin{array}{l}
{c_3} = {b_4}{f_3},\\
{b_3} = {c_4}.
\end{array} \right.$$
By changing  $X_1$ with $X'_1 = X_1 - {b_4}{X_2}$, we can assume that $b_4 = 0$. Putting this into the relation above, we get $c_3 = 0$. 

Then, the Lie brackets in $\cal G$ can be reduced as follows
\[\begin{array}{l}
\left[X_1, X_2 \right] = \,\,\,\,\,\,\,\,\,\,\,\,\,\,\,\,\,\,\,\,\,\lambda {X_5},\\
\left[X_1, X_3 \right] = \mu {X_3},\\
\left[X_1, X_4 \right] = \,\,\,\,\,\,\,\,\,\mu {X_4},\\
\left[X_1, X_5 \right] = \,\,\,\,\,\,\,\,\,\,\,\,\,\,\,\,\,\,\,\,\,2\mu {X_5},\\
\left[X_2, X_3 \right] = \,\,\,\,\,\,\,\,\,\,\,\,\,{X_4},\\
\left[X_2, X_4 \right] = \,\theta {X_3},\\
\left[X_3, X_4 \right] = \,\,\,\,\,\,\,\,\,\,\,\,\,\,\,\,\,\,\,\,\,\,\,\,{X_5}.
\end{array}\]

Let $F = \alpha \,X_1^* + \beta X_2^* + \gamma X_3^* + \delta X_4^* + \sigma X_5^* \in {\cal G}^*$  be an arbitrary element from ${\cal G}^* \equiv {\mathbb {R}^5}$. Then by simple computation, we see that
\[{B_F} = \left[ \begin{array}{l}
0\,\,\,\,\, - \lambda \sigma \,\,\,\, - \mu \gamma \,\,\,\, - \mu \delta \,\,\,\, - 2\mu \sigma \\
\lambda \sigma \,\,\,\,\,\,\,\,\,0\,\,\,\,\,\,\,\,\, - \delta \,\,\,\,\,\, - \theta \gamma \,\,\,\,\,\,\,\,\,\,\,\,0\\
\mu \gamma \,\,\,\,\,\,\,\,\delta \,\,\,\,\,\,\,\,\,\,\,\,\,\,0\,\,\,\,\,\,\,\, \,- \sigma \,\,\,\,\,\,\,\,\,\,\,\,\,\,\,0\\
\mu \delta \,\,\,\,\,\,\,\,\theta \gamma \,\,\,\,\,\,\,\,\,\,\,\,\sigma \,\,\,\,\,\,\,\,\,\,\,\,\,\,0\,\,\,\,\,\,\,\,\,\,\,\,\,\,\,\,\,0\\
2\mu \sigma \,\,\,\,0\,\,\,\,\,\,\,\,\,\,\,\,\,\,\,0\,\,\,\,\,\,\,\,\,\,\,\,\,\,\,0\,\,\,\,\,\,\,\,\,\,\,\,\,\,\,\,\,0
\end{array} \right].\]

   \item[{\bf 3.1.}] Assume $\mu  \ne 0$.
   \item[] When $\sigma  \ne 0$, we have $rank{B_F} = 4$. Because ${\cal G}$ is an
    MD5-algebra, we get $rank{B_F} \in \lbrace 0,4 \rbrace $ for all $\alpha ,\beta ,
   \delta ,\gamma,\sigma \in \mathbb {R}$. But this only can happen if $\theta  < 0$.
By changing $X_1, X_2, X_4, X_5$  with $X'_1 = \mu {X_1}, X'_2 = \sqrt{- \theta} {X_2}, X'_4 = \sqrt{- \theta}{X_4}, X'_5 = \sqrt{- \theta}{X_5}$, we can assume that $\theta = - 1$. By changing $X_2$  with $X'_2 = X_2 - \frac{\lambda}{2}{X_5}$, we can assume that $\lambda  = 0$. Hence, the Lie brackets of $\cal G$ can be reduced as follows
\[\begin{array}{l}
\left[ {{X_1};{X_3}} \right] = {X_3},\\
\left[ {{X_1};{X_4}} \right] = \,\,\,\,\,\,\,\,\,{X_4},\\
\left[ {{X_1};{X_5}} \right] = \,\,\,\,\,\,\,\,\,\,\,\,\,\,\,\,\,\,\,\,\,2{X_5},\\
\left[ {{X_2};{X_3}} \right] = \,\,\,\,\,\,\,\,\,\,\,\,{X_4},\\
\left[ {{X_2};{X_4}} \right] = \, - {X_3},\\
\left[ {{X_3};{X_4}} \right] = \,\,\,\,\,\,\,\,\,\,\,\,\,\,\,\,\,\,\,\,\,\,\,\,{X_5}.
\end{array}\]
   \item[{\bf 3.2}] Assume $\mu  = 0$. By the same way, we consider the matrix of the bilinear form ${B_F}$ and obtain $\lambda  = 0$. But this shows that ${\cal G}$ is decomposable. This construction show that this case can not happen. 
\end{itemize}
 
The theorem 2.2.1 is proved completely. So there is only one MD5-algebra having non-commutative derived ideal.$\hfill \square$

\subsection*{CONCLUDING REMARK}
Let us recall that each real Lie algebra $\cal G$ defines only one connected and simply connected Lie group G such that Lie(G) = $\cal G$. Therefore we obtain only one connected and simply connected MD5-group corresponding to the MD5-algebra given in Theorem 2.2.1. In the next paper, we shall describe the geometry of K-orbits of considered MD5-group, describe topological properties of MD5-foliation formed by the generic K-orbits of this MD5-group and give the characterization of the Connes's  $C^*$-algebra associated to this MD5-foliation.

\end{document}